\documentclass[journal]{IEEEtran}
\usepackage[cmex10]{amsmath}
\interdisplaylinepenalty=2500
\usepackage{amssymb,amsthm}

\newcommand{\BIBD}{\operatorname{BIBD}}
\newcommand{\Tr}{\operatorname{Tr}}
\newcommand{\tr}{\operatorname{tr}}
\newcommand{\Span}{\operatorname{span}}

\newcommand{\bbC}{\mathbb{C}}
\newcommand{\bbF}{\mathbb{F}}
\newcommand{\bbH}{\mathbb{H}}
\newcommand{\bbR}{\mathbb{R}}
\newcommand{\bbZ}{\mathbb{Z}}

\newcommand{\bfc}{\mathbf{c}}
\newcommand{\bfC}{\mathbf{C}}
\newcommand{\bfd}{\mathbf{d}}
\newcommand{\bfD}{\mathbf{D}}
\newcommand{\bfE}{\mathbf{E}}
\newcommand{\bfH}{\mathbf{H}}
\newcommand{\bfI}{\mathbf{I}}
\newcommand{\bfJ}{\mathbf{J}}
\newcommand{\bfP}{\mathbf{P}}

\newcommand{\bfs}{\mathbf{s}}
\newcommand{\bfS}{\mathbf{S}}
\newcommand{\bfU}{\mathbf{U}}
\newcommand{\bfV}{\mathbf{V}}
\newcommand{\bfx}{\mathbf{x}}
\newcommand{\bfX}{\mathbf{X}}
\newcommand{\bfy}{\mathbf{y}}
\newcommand{\bfY}{\mathbf{Y}}
\newcommand{\bfZ}{\mathbf{Z}}

\newcommand{\bfzero}{\mathbf{0}}
\newcommand{\bfone}{\mathbf{1}}
\newcommand{\bfdelta}{\boldsymbol{\delta}}
\newcommand{\bfPi}{\boldsymbol{\Pi}}
\newcommand{\bfSigma}{\boldsymbol{\Sigma}}
\newcommand{\bfphi}{\boldsymbol{\varphi}}
\newcommand{\bfPhi}{\boldsymbol{\Phi}}
\newcommand{\bfPsi}{\boldsymbol{\Psi}}

\newcommand{\calB}{\mathcal{B}}
\newcommand{\calD}{\mathcal{D}}
\newcommand{\calG}{\mathcal{G}}
\newcommand{\calV}{\mathcal{V}}

\newcommand{\rmE}{\mathrm{E}}
\newcommand{\rmi}{\mathrm{i}}
\newcommand{\rmT}{\mathrm{T}}

\newcommand{\abs}[1]{|{#1}|}
\newcommand{\set}[1]{\{{#1}\}}
\newcommand{\norm}[1]{\|{#1}\|}
\newcommand{\ip}[2]{\langle{#1},{#2}\rangle}
\newcommand{\romani}{\renewcommand{\labelenumi}{(\roman{enumi})}}

\newtheorem{theorem}{Theorem}
\newtheorem{corollary}{Corollary}
\newtheorem{lemma}{Lemma}

\theoremstyle{definition}
\newtheorem{definition}{Definition}
\newtheorem{problem}{Problem}

\begin{document}
\title{Equiangular Tight Frames from Hyperovals}

\author{Matthew~Fickus~\IEEEmembership{Member,~IEEE}, Dustin~G.~Mixon and John~Jasper%
\thanks{M.~Fickus and D.~G.~Mixon are with the Department of Mathematics and Statistics, Air Force Institute of Technology, Wright-Patterson Air Force Base, OH 45433, USA, e-mail: Matthew.Fickus@gmail.com.}%
\thanks{J.~Jasper is with the Department of Mathematical Sciences, University of Cincinnati, Cincinnati, OH 45221, USA.}%
\thanks{This paper was presented in part at the 2016 Spring Central Sectional Meeting of the American Mathematical Society.}%
}

\maketitle

\begin{abstract}
An equiangular tight frame (ETF) is a set of equal norm vectors in a Euclidean space whose coherence is as small as possible, equaling the Welch bound.
Also known as Welch-bound-equality sequences, such frames arise in various applications, such as waveform design, quantum information theory, compressed sensing and algebraic coding theory.
ETFs seem to be rare, and only a few methods of constructing them are known.
In this paper, we present a new infinite family of complex ETFs that arises from hyperovals in finite projective planes.
In particular, we give the first ever construction of a complex ETF of 76 vectors in a space of dimension 19.
Recently, a computer-assisted approach was used to show that real ETFs of this size do not exist,
resolving a longstanding open problem in this field.
Our construction is a modification of a previously known technique for constructing ETFs from balanced incomplete block designs.
\end{abstract}

\begin{IEEEkeywords}
equiangular tight frame, Welch bound
\end{IEEEkeywords}

\section{Introduction}

An equiangular tight frame is a type of optimal packing of lines in Euclidean space.
To be precise, let $d\leq n$ be positive integers, and let $\bbH_d$ be a real or complex Hilbert space of dimension $d$.
Welch~\cite{Welch74} gives a lower bound on the \textit{coherence} of any sequence $\set{\bfphi_i}_{i=1}^n$ of $n$ equal norm vectors in $\bbH_d$:
\begin{equation}
\label{equation.Welch bound}
\max_{i\neq j}\frac{\abs{\ip{\bfphi_i}{\bfphi_j}}}{\norm{\bfphi_i}\norm{\bfphi_j}}\geq\bigg[\frac{n-d}{d(n-1)}\biggr]^{\frac12}.
\end{equation}
It is well-known~\cite{StrohmerH03} that this lower bound is achieved if and only if $\set{\bfphi_i}_{i=1}^n$ is an \textit{equiangular tight frame} (ETF) for $\bbH_d$.

For example, when $d=6$ and $n=16$, the Welch bound~\eqref{equation.Welch bound} is $\frac13$,
and it is achieved by certain special choices of $16$ vectors in $\bbR^6$,
such as the columns of the following matrix:
\begin{equation}
\setlength{\arraycolsep}{2pt}
\label{equation.6x16 Steiner}
\left[\begin{array}{cccccccccccccccc}
+&-&+&-&+&-&+&-&0&0&0&0&0&0&0&0\\
0&0&0&0&0&0&0&0&+&-&+&-&+&-&+&-\\
+&+&-&-&0&0&0&0&+&+&-&-&0&0&0&0\\
0&0&0&0&+&+&-&-&0&0&0&0&+&+&-&-\\
+&-&-&+&0&0&0&0&0&0&0&0&+&-&-&+\\
0&0&0&0&+&-&-&+&+&-&-&+&0&0&0&0
\end{array}\right].
\end{equation}
Here, ``$+$" and ``$-$" denote $1$ and $-1$, respectively.

Having minimal coherence, ETFs are useful in a number of real-world applications, including waveform design for wireless communication~\cite{StrohmerH03}, compressed sensing~\cite{BajwaCM12,BandeiraFMW13}, quantum information theory~\cite{RenesBSC04,Zauner99} and algebraic coding theory~\cite{JasperMF14}.
Unfortunately, ETFs also seem to be rare.
In particular, when $\bbH_d$ is a real Hilbert space, $n$ and $d$ necessarily satisfy certain strong integrality conditions~\cite{SustikTDH07}.
Moreover, only a few methods for constructing infinite families of ETFs are known,
and with the exception of orthonormal bases and regular simplices,
each of these methods involve some type of combinatorial design.

Real ETFs in particular are equivalent to special types of \textit{strongly regular graphs} (SRGs)~\cite{HolmesP04,Waldron09}.
Such graphs have a rich literature~\cite{Brouwer07,Brouwer15,CorneilM91}.
The interrelated concepts of conference matrices, Hadamard matrices, Paley tournaments, and quadratic residues have all been used to construct various infinite families of ETFs in which $n$ is either nearly or exactly $2d$~\cite{StrohmerH03,HolmesP04,Renes07,Strohmer08}.
Other constructions offer much more flexibility regarding the size of $\frac nd$.
These include \textit{harmonic ETFs}, which are obtained by restricting the Fourier basis on a finite abelian group to a \textit{difference set} for that group~\cite{StrohmerH03,XiaZG05,DingF07}, and \textit{Steiner ETFs}, which arise from a tensor-like product of a simplex and the incidence matrix of a certain type of \textit{balanced incomplete block design}~\cite{FickusMT12,JasperMF14}.
To be clear, many of these ideas are rediscoveries or reimaginings of more classical results.
In particular, see~\cite{Rankin56}, \cite{Turyn65} and \cite{GoethalsS70} for precursors of the Welch bound, harmonic ETFs and Steiner ETFs, respectively.

In this paper, we generalize the Steiner ETF construction of~\cite{FickusMT12,JasperMF14} to construct a new infinite family of complex ETFs.
In particular for any $e\geq1$, we construct an $n$-vector ETF for a complex $d$-dimensional Hilbert space where
\begin{equation}
\label{equation.new ETF parameters}
d=2^{2e}+2^e-1,
\quad
n=2^e(2^{2e}+2^e-1).
\end{equation}

From our perspective, this construction is significant for two reasons.
First, as noted above, ETFs seem to be rare, and few infinite families of them are known.
Comparing against known constructions~\cite{FickusM15}, it appears that with the exception of $e=1$ case,
no real or complex ETF with parameters~\eqref{equation.new ETF parameters} has been discovered before.
In fact, taking $e=2$ in \eqref{equation.new ETF parameters} gives $d=19$ and $n=76$,
and the existence of a real ETF of this size appears to have been recently ruled out with a computer-assisted search~\cite{AzarijaM15},
resolving a longstanding open problem~\cite{Yu15}.

Second, though the construction itself is a generalization of Steiner ETFs~\cite{FickusMT12}, it exploits a new realization:
like previous work, we construct $n$-vector ETFs for $\bbH_d$ as the columns of an $m\times n$ matrix;
unlike previous work, we realize that sometimes, the most natural choice of $m$ is neither $d$ nor $n$.
For example,
the columns of the following $6\times 10$ matrix form a $10$-vector ETF for a $5$-dimensional Hilbert space, namely the orthogonal complement of the all-ones vector $\bfone$ in $\bbR^6$:
\begin{equation}
\setlength{\arraycolsep}{2pt}
\label{equation.5x10 DOE}
\left[\begin{array}{rrrrrrrrrr}
+&+&+&+&+&+&+&+&+&+\\
+&+&+&+&-&-&-&-&-&-\\
+&-&-&-&+&+&+&-&-&-\\
-&+&-&-&+&-&-&+&+&-\\
-&-&+&-&-&+&-&+&-&+\\
-&-&-&+&-&-&+&-&+&+
\end{array}\right].
\end{equation}
This can be verified by noting that all the columns of this matrix are orthogonal to $\bfone$
and that together they achieve equality in~\eqref{equation.Welch bound} for $n=10$ and $d=5$.
Inspired by problems in experimental design,
this example was produced by taking all $\binom{6}{3}=20$ $\set{\pm1}$-valued vectors in $\bbR^6$ that are orthogonal to $\bfone$,
and discarding one vector from each pair of antipodes.
Our new construction is a generalization of this example: we form ETFs of size~\eqref{equation.new ETF parameters} as the columns of an $m\times n$ matrix where $m=d+1=2^{2e}+2^e$.
The construction only happens to be real in the $e=1$ case, namely~\eqref{equation.5x10 DOE}.

In the next section, we introduce the background material we need for the rest of the paper.
In Section~III we show how to construct complex ETFs of size~\eqref{equation.new ETF parameters}.
The construction relies on special collections of lines in finite affine planes that arise from \textit{hyperovals} in corresponding projective planes.
In the fourth and final section, we discuss how these ETFs can be made \textit{flat}.
This leads to several interesting open problems.

\section{Background}

\subsection{Steiner equiangular tight frames}
Throughout, let the field of scalars $\bbF$ be either $\bbR$ or $\bbC$, and let $\bbH_d$ be a $d$-dimensional inner product space over $\bbF$.
The \textit{synthesis operator} of a finite sequence of vectors $\set{\bfphi_i}_{i=1}^{n}$ in $\bbH_d$ is $\bfPhi:\bbF^n\rightarrow\bbH_d$, $\bfPhi\bfy:=\sum_{i=1}^{n}\bfy(i)\bfphi_i$.
The adjoint of $\bfPhi$ is the \textit{analysis operator} $\bfPhi^*:\bbH_d\rightarrow\bbF^n$, $(\bfPhi^*\bfx)(n)=\ip{\bfphi_n}{\bfx}$.
Composing these two operators gives the \textit{frame operator} $\bfPhi\bfPhi^*:\bbH_d\rightarrow\bbH_d$,
$\bfPhi\bfPhi^*\bfx=\sum_{i=1}^{n}\ip{\bfphi_i}{\bfx}\bfphi_i$ and the $n\times n$ \textit{Gram matrix} $\bfPhi^*\bfPhi$ whose $(i,j)$th entry is $\ip{\bfphi_i}{\bfphi_j}$.

A sequence of vectors $\set{\bfphi_i}_{i=1}^{n}$ is a \textit{tight frame} for $\bbH_d$ if there exists $a>0$ such that $\bfPhi\bfPhi^*=a\bfI$.
When $a$ is specified, it is an \textit{$a$-tight frame}.
It is \textit{equal norm} if there exists $r>0$ such that $\norm{\bfphi_i}^2=r$ for all $i$,
and is \textit{equiangular} if it is equal norm and there exists $w\geq0$ such that $\abs{\ip{\bfphi_i}{\bfphi_j}}=w$ for all $i\neq j$.
When $\set{\bfphi_i}_{i=1}^{n}$ is both equiangular and a tight frame, it is an \textit{equiangular tight frame} (ETF).
It is well known~\cite{StrohmerH03} that equal norm vectors $\set{\bfphi_i}_{i=1}^{n}$ in $\bbH_d$ achieve equality in~\eqref{equation.Welch bound} if and only if they form an ETF for $\bbH_d$;
see Lemma~\ref{lemma.tight for subspace} below for a generalized version of this result.

In the special case where $\bbH_d$ is $\bbF^d$, the synthesis operator of $\set{\bfphi_i}_{i=1}^{n}$ is simply the $d\times n$ matrix $\bfPhi$ whose $i$th column is $\bfphi_i$.
In this case, $\set{\bfphi_i}_{i=1}^{n}$ is an ETF for $\bbF^d$ if and only if the rows of $\bfPhi$ are orthogonal and have constant norm (tightness) and the columns of $\bfPhi$ are equiangular.
As we now discuss, Steiner ETFs~\cite{FickusMT12} are one way to directly construct such matrices;
in Section~III, we generalize this construction.

A \textit{balanced incomplete block design} (BIBD) is a finite set $\calV$---whose elements are called \textit{vertices}---along with any set $\calB$ of subsets of $\calV$---whose elements are called \textit{blocks}---for which there exists positive integers $\lambda$, $k$, $r$ such that:
\begin{enumerate}
\romani
\item
every block contains exactly $k$ vertices,
\item
every vertex is contained in exactly $r$ blocks,
\item
any pair of vertices is contained in exactly $\lambda$ blocks.
\end{enumerate}
Letting $v$ and $b$ denote the desired number of vertices and blocks, respectively, a $\set{0,1}$-valued $b\times v$ matrix $\bfX$ is an incidence matrix of a BIBD precisely when
\begin{equation}
\label{equation.BIBD properties}
\bfX\bfone=k\bfone,
\quad
\bfX^\rmT\bfX=(r-\lambda)\bfI+\lambda\bfJ,
\end{equation}
for some integers $k$, $r$ and $\lambda$.
Here and throughout, $\bfone$ and $\bfJ$ denote all-ones vectors and matrices, respectively.
Note the second condition in~\eqref{equation.BIBD properties} implies $\bfone^\rmT\bfX=r\bfone^\rmT$.
Moreover, $v$, $k$ and $\lambda$ determine $r$ and $b$ according to
\begin{equation}
\label{equation.BIBD parameters}
bk=vr,
\quad
\lambda(v-1)=r(k-1).
\end{equation}
To see this, note multiplying $\bfX$ on the left and right by $\bfone^\rmT$ and $\bfone$, respectively, gives the first identity,
while doing the same to the equation $\bfX^\rmT\bfX=(r-\lambda)\bfI+\lambda\bfJ$ gives the second.
Because of this, such incidence structures are often called a \textit{$2$-$(v,k,\lambda)$ design}, and are denoted as a $\BIBD(v,k,\lambda)$.

Steiner ETFs arise from BIBDs with $\lambda=1$, which are also known as $(2,k,v)$-\textit{Steiner systems}.
Having $\lambda=1$ means that any two distinct vertices determine a unique block,
and thus such BIBDs are a type of finite geometry.
Indeed the canonical examples of such BIBDs are finite \textit{affine} and \textit{projective planes} of \textit{order} $q\geq 2$,
namely $\BIBD(q^2,q,1)$ and $\BIBD(q^2+q+1,q+1,1)$, respectively.
As detailed later on in this section, such designs are known to exist whenever $q$ is the power of a prime.
When $q=2$, we have the following incidence matrices $\bfX$, for example:
\begin{equation}
\setlength{\arraycolsep}{2pt}
\label{equation.6x4 and 7x7 BIBDs}
\left[\begin{array}{rrrr}
1&1&0&0\\
0&0&1&1\\
1&0&1&0\\
0&1&0&1\\
1&0&0&1\\
0&1&1&0
\end{array}\right],\quad
\left[\begin{array}{rrrrrrr}
1&1&0&0&1&0&0\\
0&0&1&1&1&0&0\\
1&0&1&0&0&1&0\\
0&1&0&1&0&1&0\\
1&0&0&1&0&0&1\\
0&1&1&0&0&0&1\\
0&0&0&0&1&1&1
\end{array}\right].
\end{equation}

A Steiner ETF is formed by taking a tensor-like product of the $b\times v$ incidence matrix of a $\BIBD(v,k,1)$ and a (possibly complex) Hadamard matrix of size $r+1$.
To define them rigorously,
we borrow the following concept from~\cite{FickusJMP16}:

\begin{definition}
\label{definition.embedding}
Let $\bfX$ be a $b\times v$ incidence matrix of a $\BIBD(v,k,1)$.
For any $j=1,\dotsc,v$, a corresponding \textit{embedding} is an operator $\bfE_j:\bbF^r\rightarrow\bbF^b$ that maps the standard basis of $\bbF^r$ to the standard basis of the $r$-dimensional subspace of $\bbF^b$ that consists of vectors supported on $\set{i: \bfX(i,j)=1}$.
\end{definition}

For example, for the $6\times 4$ incidence matrix in~\eqref{equation.6x4 and 7x7 BIBDs} we can take $\bfE_1$, $\bfE_2$, $\bfE_3$ and $\bfE_4$ to be
\begin{equation}
\setlength{\arraycolsep}{2pt}
\label{equation.6x3 embedding operators}
\left[\begin{array}{rrr}
1&0&0\\
0&0&0\\
0&1&0\\
0&0&0\\
0&0&1\\
0&0&0
\end{array}\right],\
\left[\begin{array}{rrr}
1&0&0\\
0&0&0\\
0&0&0\\
0&1&0\\
0&0&0\\
0&0&1
\end{array}\right],\
\left[\begin{array}{rrr}
0&0&0\\
1&0&0\\
0&1&0\\
0&0&0\\
0&0&0\\
0&0&1
\end{array}\right],\
\left[\begin{array}{rrr}
0&0&0\\
1&0&0\\
0&0&0\\
0&1&0\\
0&0&1\\
0&0&0
\end{array}\right],
\end{equation}
respectively.
A Steiner ETF is formed by using these operators to embed $v$ copies of a unimodular simplex for $\bbF^r$ into $\bbF^b$.
To be precise, let $\set{\bfs_l}_{l=1}^{r+1}$ be the columns of an $r\times(r+1)$ submatrix $\bfS$ of an $(r+1)\times(r+1)$ Hadamard matrix whose entries lie in $\bbF$.
For example, the $6\times 4$ incidence matrix in~\eqref{equation.6x4 and 7x7 BIBDs} has $r=3$ and we can form $\bfS$ by removing the first row from the canonical $4\times 4$ Hadamard matrix:
\begin{equation}
\setlength{\arraycolsep}{2pt}
\label{equation.3x4 unimodular simplex}
\bfS
=\left[\begin{array}{rrrr}
+&-&+&-\\
+&+&-&-\\
+&-&-&+
\end{array}\right].
\end{equation}
For any $\BIBD(v,k,1)$ and unimodular simplex $\set{\bfs_l}_{l=1}^{r+1}$,
the corresponding Steiner ETF is $\set{\bfE_j^{}\bfs_l}_{j=1,}^{v}\,_{l=1}^{r+1}$.
As shown in~\cite{FickusMT12,FickusJMP16}, these $v(r+1)$ vectors form an ETF for $\bbF^b$,
and their synthesis operator is the block matrix
\begin{equation*}
\bfPhi
=\left[\begin{array}{ccc}\bfE_1\bfS&\cdots&\bfE_v\bfS\end{array}\right].
\end{equation*}
For example, using~\eqref{equation.6x3 embedding operators} to embed $4$ copies of the unimodular simplex in~\eqref{equation.3x4 unimodular simplex} gives the $16$-vector ETF for $\bbR^6$ given in~\eqref{equation.6x16 Steiner}.
The main idea of the construction is that $\ip{\bfE_j\bfs_l}{\bfE_{j'}\bfs_{l'}}$ has unit modulus whenever $(j,l)\neq(j',l')$:
if $j\neq j'$, this follows from the fact that any two distinct blocks in the BIBD have exactly one vertex in common;
if $j=j'$ but $l\neq l'$, this follows from the fact that the inner product of any two distinct columns of $\bfS$ has unit modulus.
A Steiner ETF can be real whenever there exists a real Hadamard matrix of size $r+1$.
The famously-unresolved Hadamard conjecture proposes that this happens precisely when $r+1$ is divisible by $4$.

\subsection{Finite affine and projective planes}

In the next section, we generalize the Steiner ETF construction in a way that yields complex ETFs with parameters~\eqref{equation.new ETF parameters}.
This construction relies on some unusually nice properties of finite affine and projective planes of order $q$,
namely of $\BIBD(q^2,q,1)$ and $\BIBD(q^2+q+1,q+1,1)$, respectively.
(For the sake of brevity, we omit the standard, axiomatic definitions of these geometries).

We will need the fact that any affine plane of order $q$ is a subincidence structure of a projective plane of order $q$.
This follows from a type of parallel postulate that any $\BIBD(v,k,1)$ must satisfy.
To be precise, let $\bfX$ be the incidence matrix of such a BIBD, and note the $(i,i')$th entry of $(k-1)\bfI+\bfJ-\bfX\bfX^\rmT$ is $1$ when the $i$th and $i'$th blocks are disjoint, and is otherwise $0$.
The number of blocks disjoint from block $i$ that contain vertex $j$ is thus given by the $(i,j)$th entry of
$[(k-1)\bfI+\bfJ-\bfX\bfX^\rmT]\bfX=(r-k)(\bfJ-\bfX)$;
here we have used~\eqref{equation.BIBD parameters} and~\eqref{equation.BIBD properties} to simplify.
As such, if a block does not contain a vertex, there are exactly $r-k$ blocks disjoint from it that do contain that vertex.

In particular, for a $\BIBD(q^2,q,1)$ we have $r-k=1$ and so any vertex not in a block is contained in exactly one disjoint block.
This implies that if blocks $i$ and $i'$ are disjoint, and blocks $i'$ and $i''$ are disjoint,
then blocks $i$ and $i''$ are either equal or disjoint, or else any point in their intersection is not contained in exactly one block disjoint from block $i'$.
Saying two blocks are \textit{parallel} when they are either equal or disjoint,
we thus have that parallelism is an equivalence relation on the blocks of an affine plane.
As a consequence, any affine plane is \textit{resolvable},
meaning its $b=q(q+1)$ blocks can be arranged as $r=q+1$ groupings of $\frac vk=q$ blocks, each forming a partition for the set of vertices.
For example, the incidence matrix of the affine plane of order $2$ given in~\eqref{equation.6x4 and 7x7 BIBDs} is arranged so that its first and second blocks partition its vertices, as does its third and fourth blocks, and its fifth and sixth blocks.

The fact that any affine plane is resolvable allows it to be extended to a projective plane of the same order.
To do this, we append each block in any of the $q+1$ parallel classes with a new ``vertex at infinity" that is unique to that class,
and create a single new ``block at infinity" that contains the $q+1$ new vertices.
For example, extending the $6\times 4$ incidence matrix of~\eqref{equation.6x4 and 7x7 BIBDs} in this manner yields the $7\times 7$ matrix to its right.
Conversely, any projective plane contains many affine planes:
choose any block, and remove it as well as all vertices it contains.

Projective planes are \textit{symmetric} BIBDs, meaning $v=b$, or equivalently that $k=r$.
In general, the \textit{dual} of a $\BIBD(v,k,\lambda)$ is the incidence structure obtained by interchanging the roles of vertices and blocks, or equivalently, taking the transpose of its incidence matrix $\bfX$.
Here, \eqref{equation.BIBD parameters} implies \smash{$r^2-b\lambda=\frac{r^2(v-k)}{k(v-1)}\geq0$},
at which point~\eqref{equation.BIBD properties} implies the columns of $(r-\lambda)^{-\frac12}\{\bfX-\tfrac1b[r\pm(r^2-b\lambda)^\frac12]\bfJ\}$ are orthonormal.
When the BIBD is symmetric, this matrix is square and so its rows are also necessarily orthonormal, implying $\bfX\bfX^\rmT=(k-\lambda)\bfI+\lambda\bfJ$.
Thus, the dual of a symmetric $\BIBD(v,k,\lambda)$ is another $\BIBD(v,k,\lambda)$.
In particular, the dual of any projective plane is another projective plane.

Whenever $q$ is the power of a prime, there is a canonical construction of affine and projective planes of order $q$.
In particular, letting $\bbF_q$ be the field of $q$ elements,
we can form an affine plane by letting $\calV=\bbF_q^2$ and letting $\calB$ be the collection of all affine lines in this vector space,
namely sets of the form
\begin{equation}
\label{equation.affine block}
\set{(x,y)\in\bbF_q^2: d x+e y+f=0}
\end{equation}
for some $d,e,f\in\bbF_q$ where $(d,e)\neq(0,0)$.
Though this manner of parametrizing these lines is redundant---multiplying $(d,e,f)$ by a nonzero scalar yields the same line---it gives a simple way to produce parallel classes: fix $(d,e)\neq(0,0)$ and vary $f$.
It also facilitates the embedding of this affine plane in a projective plane where vertices and blocks correspond to one- and two-dimensional subspaces of $\bbF_q^3$, respectively.

To be precise, let $[x,y,z]$ denote the set of all nonzero scalar multiples of a nonzero vector $(x,y,z)\in\bbF_q^3$.
To form a projective plane, let $\calV$ be the set of all such $[x,y,z]$,
and let $\calB$ be all sets of the form
\begin{equation}
\label{equation.projective block}
\set{[x,y,z]: d x+e y+f z=0}
\end{equation}
for some $[d,e,f]$.
The mapping $(x,y)\mapsto[x,y,1]$ embeds our canonical affine geometry into this projective geometry:
for any $[d,e,f]$ with $(d,e)\neq(0,0)$, \eqref{equation.projective block} is the image of \eqref{equation.affine block} under this mapping, along with the ``vertex at infinity" $[-e,d,0]$.
Note that as expected, each new vertex corresponds to a unique parallel class in the affine plane,
and taken together they constitute a new ``block at infinity," namely~\eqref{equation.projective block} where $[d,e,f]=[0,0,1]$.

Here, we can identify $\bbF_q^3$ with the additive group of the field $\bbF_{q^3}$.
To be precise, let $\alpha$ be a generator of the cyclic multiplicative group $\bbF_{q^3}^\times$, and let $(x,y,z):=x+y\alpha+z\alpha^2$ for all $x,y,z\in\bbF_q$.
Since each vertex $[x,y,z]$ consists of all nonzero scalar multiples of the nonzero vector $(x,y,z)$,
it is a unique element of the quotient group $\bbF_{q^3}^\times/\bbF_q^\times$.
That is,
\begin{equation*}
\calV
=\bbF_{q^3}^\times/\bbF_q^\times
=\langle\alpha\rangle/\langle\alpha^{q^2+q+1}\rangle
\cong\bbZ_{q^2+q+1}.
\end{equation*}
Moreover, each two-dimensional subspace of $\bbF_{q^3}$ is the kernel of a mapping of the form $\beta\mapsto\tr(\alpha^{-l}\beta)$ where $\tr:\bbF_{q^3}\rightarrow\bbF_q$,
\smash{$\tr(\beta):=\beta+\beta^q+\beta^{q^2}$}, is the \textit{field trace}.
As such, all blocks correspond to translates of the subset of $\bbZ_{q^2+q+1}$ which correspond to members of $\bbF_{q^3}^\times/\bbF_q^\times$ whose coset representatives have trace $0$.
The resulting incidence matrix is circulant, and corresponds to a \textit{Singer difference set}.

These explicit constructions show that affine and projective planes of order $q$ exist whenever $q$ is a power of a prime.
Conversely, it is famously conjectured that if there exists an affine or projective plane of order $q$ then $q$ is necessarily a prime power.
This conjecture remains open for many values of $q$, such as $12$.
We also note that for some prime powers $q$, there are constructions of projective planes of order $q$ that are provably not \textit{Desarguesian}, meaning they are not isomorphic to the canonical construction given above.
Our new method for constructing complex ETFs can be applied to any projective plane of order $q$ that contains a \textit{hyperoval}, defined below.
Applying this construction to the canonical projective plane of order $q=2^e$ yields complex ETFs with parameters~\eqref{equation.new ETF parameters}.
If non-prime-power-order examples of such projective planes are discovered in the future, this same construction will yield yet more complex ETFs.

\section{New ETFs from hyperovals}

In this section, we show how to construct ETFs with parameters~\eqref{equation.new ETF parameters}.
As detailed below, the main idea is that any projective plane whose dual contains a hyperoval also contains an affine plane that contains the dual of a $\BIBD(q+1,2,1)$.
This permits a new Steiner-like ETF construction involving Hadamard matrices of two distinct sizes.
Some of the novelty here is that these ETFs most naturally arise in non-obvious subspaces of $\bbF^m$.
The following result characterizes such frames in general.

\begin{lemma}
\label{lemma.tight for subspace}
For any vectors $\set{\bfphi_i}_{i=1}^{n}$ in $\bbF^m$, let $\bfPhi$ be the $m\times n$ matrix whose $i$th column is $\bfphi_i$ for all $i$.
For any $a>0$,
the following are equivalent:
\begin{enumerate}
\romani
\item
$\set{\bfphi_i}_{i=1}^{n}$ forms an $a$-tight frame for its span,
\item
$\bfPhi\bfPhi^*\bfPhi=a\bfPhi$,
\item
$(\bfPhi\bfPhi^*)^2=a\bfPhi\bfPhi^*$,
\item
$(\bfPhi^*\bfPhi)^2=a\bfPhi^*\bfPhi$.
\end{enumerate}

Also, if $\set{\bfphi_i}_{i=1}^{n}$ is contained in a $d$-dimensional subspace $\bbH_d$ of $\bbF^m$, then it forms an $a$-tight frame for $\bbH_d$ if and only if  $\bfPhi\bfPhi^*=a\bfPi$ where $\bfPi$ is the $m\times m$ orthogonal projection matrix onto $\bbH_d$.

As such, $\set{\bfphi_i}_{i=1}^{n}$ forms an ETF for its span if and only if (ii)--(iv) hold and $\set{\bfphi_i}_{i=1}^{n}$ is equiangular.
In this case, letting $r=\norm{\bfphi_i}^2$, the dimension $d$ can be computed from either the tight frame constant or the equiangularity constant:
\begin{equation*}
a=\frac{rn}d,
\quad
w=r\biggl[\frac{n-d}{d(n-1)}\biggr]^{\frac12}.
\end{equation*}

Alternatively, equal norm vectors $\set{\bfphi_i}_{i=1}^{n}$ in a subspace $\bbH_d$ of $\bbF^m$ of dimension $d$ form an ETF for $\bbH_d$ if and only if they achieve equality in~\eqref{equation.Welch bound}.
\end{lemma}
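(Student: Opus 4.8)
The plan is to treat the lemma in the order stated, building everything on the spectral structure of the positive semidefinite self-adjoint operators $\bfPhi\bfPhi^*$ and $\bfPhi^*\bfPhi$. First I would record the elementary fact that $\bfPhi\bfPhi^*$ is self-adjoint with range equal to $V:=\Span\set{\bfphi_i}_{i=1}^n$ and kernel $V^\perp$, so that forming an $a$-tight frame for $V$ is \emph{precisely} the statement $\bfPhi\bfPhi^*=a\bfPi_V$, where $\bfPi_V$ is the orthogonal projection onto $V$. To prove (i)$\Leftrightarrow$(iii), I would invoke that a self-adjoint $T$ satisfying $T^2=aT$ has every eigenvalue in $\set{0,a}$ and hence equals $a$ times the projection onto its range; applied to $T=\bfPhi\bfPhi^*$ this gives $\bfPhi\bfPhi^*=a\bfPi_V$, while the reverse implication is immediate from $\bfPi_V^2=\bfPi_V$. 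For the bridge through (ii), note that (ii)$\Rightarrow$(iii) by right-multiplying $\bfPhi\bfPhi^*\bfPhi=a\bfPhi$ by $\bfPhi^*$, and (ii)$\Rightarrow$(iv) by left-multiplying by $\bfPhi^*$. The converses (iii)$\Rightarrow$(ii) and (iv)$\Rightarrow$(ii) form the one genuinely computational step: setting $M:=\bfPhi\bfPhi^*\bfPhi-a\bfPhi$, I would expand $MM^*$ using (iii) and $M^*M$ using (iv), repeatedly applying $(\bfPhi\bfPhi^*)^2=a\bfPhi\bfPhi^*$ (resp.\ $(\bfPhi^*\bfPhi)^2=a\bfPhi^*\bfPhi$) so that each expression telescopes to $\bfzero$; since $MM^*=\bfzero$ forces $M=\bfzero$ (e.g.\ by taking a trace), this yields (ii).

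For the projection characterization, suppose the vectors lie in a $d$-dimensional subspace $\bbH_d$. If they form an $a$-tight frame for $\bbH_d$, then $V\subseteq\bbH_d$, while tightness forces $\bfPhi\bfPhi^*$ to act as $a>0$ on $\bbH_d$, so its range is all of $\bbH_d$ and thus $V=\bbH_d$; combined with $\ker\bfPhi\bfPhi^*=V^\perp=\bbH_d^\perp$ this gives $\bfPhi\bfPhi^*=a\bfPi$. Conversely, $\bfPhi\bfPhi^*=a\bfPi$ immediately yields $\bfPhi\bfPhi^*\bfx=a\bfx$ for $\bfx\in\bbH_d$, and forces the range $\bbH_d$ of $a\bfPi$ into $V$, so $V=\bbH_d$ and the frame is tight for $\bbH_d$. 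The ETF-for-its-span assertion is then merely the conjunction of ``tight for its span'', which by the first part is any one of (ii)--(iv), with equiangularity, the latter already subsuming equal norm by definition.

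The two dimension formulas would follow from trace identities. Taking the trace of $\bfPhi\bfPhi^*=a\bfPi_V$ gives $nr=\tr(\bfPhi^*\bfPhi)=\tr(\bfPhi\bfPhi^*)=a\,\tr(\bfPi_V)=ad$, hence $a=rn/d$. For $w$, I would compute $\tr((\bfPhi^*\bfPhi)^2)$ two ways: entrywise it equals $\sum_{i,j}\abs{\ip{\bfphi_i}{\bfphi_j}}^2=nr^2+n(n-1)w^2$, whereas (iv) gives $\tr((\bfPhi^*\bfPhi)^2)=a\,\tr(\bfPhi^*\bfPhi)=a\cdot nr=n^2r^2/d$; equating and solving for $w$ reproduces $w=r[(n-d)/(d(n-1))]^{1/2}$.

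Finally, for the Welch-bound clause, let $G=\bfPhi^*\bfPhi$, a positive semidefinite matrix of rank at most $d$ since the vectors lie in $\bbH_d$. The bound follows from $\tr(G)^2\leq\operatorname{rank}(G)\,\tr(G^2)\leq d\,\tr(G^2)$ (Cauchy--Schwarz on the eigenvalues) together with $\tr(G)=nr$ and $\tr(G^2)=nr^2+\sum_{i\neq j}\abs{\ip{\bfphi_i}{\bfphi_j}}^2\leq nr^2+n(n-1)w_{\max}^2$, where $w_{\max}:=\max_{i\neq j}\abs{\ip{\bfphi_i}{\bfphi_j}}$; rearranging reproduces exactly the Welch bound for the coherence $w_{\max}/r$. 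The main obstacle, and the part demanding care, is the equality analysis: equality throughout forces both (a) $\operatorname{rank}(G)=d$ with all nonzero eigenvalues of $G$ equal, i.e.\ $G^2=aG$, which is condition (iv) together with $\dim V=d$ and hence a tight frame for all of $\bbH_d$; and (b) every off-diagonal $\abs{\ip{\bfphi_i}{\bfphi_j}}$ equal to $w_{\max}$, i.e.\ equiangularity. Thus equality in the Welch bound is equivalent to being an ETF for $\bbH_d$, the converse direction (ETF implies equality) already being supplied by the formula for $w$ computed in the previous step.
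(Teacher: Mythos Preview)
Your argument is correct and follows the same overall architecture as the paper's proof, but differs in two technical choices worth flagging. For the implication (iii)/(iv)$\Rightarrow$(ii), the paper appeals to the singular value decomposition $\bfPhi=\bfU\bfSigma\bfV^*$: either of (iii) or (iv) forces the singular values into $\set{0,\sqrt{a}}$, whence $\bfSigma\bfSigma^*\bfSigma=a\bfSigma$ and so (ii). Your device of setting $M=\bfPhi\bfPhi^*\bfPhi-a\bfPhi$ and expanding $MM^*$ (resp.\ $M^*M$) is equivalent and avoids invoking the SVD, at the cost of a slightly longer computation. For the Welch-bound equality analysis, the paper instead expands the Frobenius norm $\Tr[(\bfPhi\bfPhi^*-\tfrac{rn}{d}\bfPi)^2]$ directly, which simultaneously yields the bound and identifies the two equality conditions (tightness from the first inequality, equiangularity from the second); your Cauchy--Schwarz argument on the eigenvalues of $G=\bfPhi^*\bfPhi$ achieves the same thing and makes the rank condition $\operatorname{rank}(G)=d$ (hence $V=\bbH_d$) slightly more explicit. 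Either route is fine; the paper's is marginally more self-contained since the Frobenius-norm expansion does double duty for both the $w$-formula and the Welch-bound equivalence.
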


\begin{proof}
Fix $\set{\bfphi_i}_{i=1}^{n}$ in $\bbF^m$ and $a>0$,
and let $\bbH_d$ be any $d$-dimensional subspace of $\bbF^m$ that contains $\set{\bfphi_i}_{i=1}^{n}$.

To show (i) is equivalent to (ii),
note the codomain of the synthesis operator $\bfy\mapsto\bfPhi\bfy$ can either be regarded as $\bbF^m$ or $\bbH_d$,
and that by definition, $\set{\bfphi_i}_{i=1}^{n}$ forms an $a$-tight frame for $\bbH_d$ if and only if the restricted version of this operator, namely $\bfPhi:\bbF^n\rightarrow\bbH_d$, satisfies $\bfPhi\bfPhi^*=a\bfI$.
When regarding $\bfPhi$ as an $m\times n$ matrix,
this is equivalent to having $\bfPhi\bfPhi^*\bfx=a\bfx$ for all $\bfx\in\bbH_d$.
In the special case where $\bbH_d$ is taken to be $\Span\set{\bfphi_i}_{i=1}^{n}=\set{\bfPhi\bfy: \bfy\in\bbF^n}$,
this tells us (i) is equivalent to having $\bfPhi\bfPhi^*\bfPhi\bfy=a\bfPhi\bfy$ for all $\bfy\in\bbF^n$, namely (ii).

More generally, writing $\bbH_d=\set{\Pi\bfx: \bfx\in\bbF^m}$, we see that $\set{\bfphi_i}_{i=1}^{n}$ forms an $a$-tight frame for $\bbH_d$ if and only if $\bfPhi\bfPhi^*\bfPi\bfx=a\bfPi\bfx$ for all $\bfx\in\bbF^m$.
To simplify this further, note that since $\bfPi\bfphi_i=\bfphi_i$ for all $i$, we have $\bfPi\bfPhi=\bfPhi$ and so $\bfPhi^*\bfPi=(\bfPi\bfPhi)^*=\bfPhi^*$.
Thus, $\set{\bfphi_i}_{i=1}^{n}$ forms an $a$-tight frame for $\bbH_d$ if and only if $\bfPhi\bfPhi^*=a\bfPi$, as claimed.
We emphasize that the above argument is delicate, and fails if we do not assume that each $\bfphi_i$ lies in $\bbH_d$.
Indeed, without this assumption, it is possible for $\set{\bfphi_i}_{i=1}^{n}$ to not form an $a$-tight frame for $\bbH_d$ and yet still have $\bfPhi\bfPhi^*\bfx=a\bfx$ for all $\bfx\in\bbH_d$, namely tight \textit{pseudoframes}~\cite{LiO04}.

Continuing, note that multiplying (ii) by $\bfPhi^*$ on the right or left gives (iii) and (iv), respectively.
Conversely, taking the singular value decomposition $\bfPhi=\bfU\bfSigma\bfV^*$, if either (iii) or (iv) hold then the diagonal entries of the diagonal block of $\Sigma$ are all either $0$ or $\sqrt{a}$, implying $\Sigma\Sigma^*\Sigma=\Sigma$ and so (ii).
To summarize, we have proven that (i)--(iv) are equivalent as well as the second claim.

For the remaining conclusions, we further assume that $\norm{\bfphi_i}^2=r$ for all $i$,
and generalize an argument of~\cite{JasperMF14}.
If $\set{\bfphi_i}_{i=1}^{n}$ is an $a$-tight frame for $\bbH_d$, cycling a trace gives
\begin{equation*}
a d
=\Tr(a\bfPi)
=\Tr(\bfPhi\bfPhi^*)
=\Tr(\bfPhi^*\bfPhi)
=rn,
\end{equation*}
and so \smash{$a=\frac{rn}d$}.
As such, one way to measure the tightness of $\set{\bfphi_i}_{i=1}^{n}$ is to take the Frobenius norm of $\bfPhi\bfPhi^*-\tfrac{rn}d\bfPi$.
We now simplify and bound this quantity using trace properties:
\begin{align}
0
\nonumber&\leq\Tr[(\bfPhi\bfPhi^*-\tfrac{rn}d\bfPi)^2]\\
\nonumber&=\Tr(\bfPhi^*\bfPhi\bfPhi^*\bfPhi)-2\tfrac{rn}d\Tr(\bfPhi^*\bfPhi)+(\tfrac{rn}{d})^2\Tr(\bfPi)\\
\nonumber&=\sum_{i=1}^{n}\sum_{j=1}^{n}\abs{\ip{\bfphi_i}{\bfphi_j}}^2-\tfrac{r^2n^2}d\\
\nonumber&=r^2\sum_{i=1}^{n}\sum_{\substack{j=1\\j\neq i}}^{n}\Bigl(\tfrac{\abs{\ip{\bfphi_i}{\bfphi_j}}}{\norm{\bfphi_i}\norm{\bfphi_j}}\Bigr)^2-r^2n(\tfrac nd-1)\\
\label{equation.proof of Welch bound}
&\leq r^2n(n-1)\Bigl(\max_{i\neq j}\tfrac{\abs{\ip{\bfphi_i}{\bfphi_j}}}{\norm{\bfphi_i}\norm{\bfphi_j}}\Bigr)^2-r^2\tfrac{n(n-d)}{d}.
\end{align}
Solving for the coherence here gives the Welch bound~\eqref{equation.Welch bound}.
Moreover, if $\set{\bfphi_i}_{i=1}^{n}$ is an ETF for $\bbH_d$ with $\abs{\ip{\bfphi_i}{\bfphi_j}}=w$ for all $i\neq j$, the two above inequalities become equalities, giving equality in~\eqref{equation.Welch bound} and \smash{$w=r[\tfrac{n-d}{d(n-1)}]^{\frac12}$}.
Conversely, if~\eqref{equation.Welch bound} holds with equality, the final quantity in~\eqref{equation.proof of Welch bound} is $0$,
implying both inequalities are equalities and so $\set{\bfphi_i}_{i=1}^{n}$ is tight and equiangular.
\end{proof}

These characterizations in hand, we turn to the construction itself.
A \textit{hyperoval} in a projective plane of order $q$ is a set of $q+2$ vertices where no three of these vertices lie in a common block.
For example, the first four vertices (columns) of the projective plane of order $2$ given in~\eqref{equation.6x4 and 7x7 BIBDs} are a hyperoval since no block (row) contains three of them.
As we now detail, hyperovals decompose a projective plane into several other incidence structures.
\begin{lemma}
\label{lemma.hyperoval decomposition}
A projective plane of order $q$ contains a hyperoval if and only if it has an incidence matrix of form
\begin{equation}
\label{equation.hyperoval decomposition}
\bfX=\left[\begin{array}{cc}\bfX_{1,1}&\bfX_{1,2}\\\bfzero&\bfX_{2,2}\end{array}\right]
\end{equation}
where $\bfX_{1,1}$ is the incidence matrix of a $\BIBD(q+2,2,1)$.
Here, the columns of $\bfX_{1,1}$ correspond to the vertices of the hyperoval.
Moreover, in this case, $q$ is even and $\bfX_{2,2}^\rmT$ is the incidence matrix of a $\BIBD(\frac12q(q-1),\frac12q,1)$.
\end{lemma}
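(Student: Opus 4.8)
The plan is to prove the stated equivalence in both directions and then read off the numerical consequences, with the entire forward direction resting on a single incidence count. The reverse implication is immediate: if $\bfX$ has the stated block form with $\bfX_{1,1}$ an incidence matrix of a $\BIBD(q+2,2,1)$, then every row of $\bfX_{1,1}$ has exactly $k=2$ ones while every row of the $\bfzero$ block has none, so each block (row) of $\bfX$ meets the first $q+2$ vertices in at most two points. Hence no block contains three of these vertices, and they form a hyperoval.

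For the forward implication, let $H$ be a hyperoval, so $\abs{H}=q+2$ and every line meets $H$ in at most two points. I would first rule out \emph{tangent} lines (lines meeting $H$ in exactly one point) by double counting incidences between $H$ and the lines. Each vertex of $H$ lies on $r=q+1$ lines, for $(q+2)(q+1)$ incidences in all. On the other hand, since $\lambda=1$ the line through each pair of vertices of $H$ is a secant, and distinct pairs give distinct secants (a common line would contain three vertices of $H$); thus there are exactly $\binom{q+2}{2}=\frac12(q+2)(q+1)$ secants, contributing $(q+2)(q+1)$ incidences. As these exhaust all incidences, every line is secant or external. Ordering the columns of $\bfX$ with the vertices of $H$ first and the rows with the secants first then yields the block form: external lines avoid $H$, forcing the lower-left block to be $\bfzero$, while the secant--pair bijection makes $\bfX_{1,1}$ an incidence matrix of a $\BIBD(q+2,2,1)$.

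For the remaining claims, fix a vertex $P\notin H$, which exists since $q+2<q^2+q+1$. The $q+1$ lines through $P$ cover all other vertices, in particular all of $H$, and since two lines through $P$ meet only at $P\notin H$, the secants through $P$ partition $H$ into pairs. Therefore $q+2$ is even, so $q$ is even, and $P$ lies on exactly $\frac12(q+2)$ secants and hence $\frac12 q$ external lines. In the transpose $\bfX_{2,2}^\rmT$ the rows (non-hyperoval vertices) play the role of blocks and the columns (external lines) the role of vertices: each block has size $\frac12 q$ by the previous count, and any two external lines meet in a unique point, which lies outside $H$ (else both would be secants), giving $\lambda=1$. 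Since there are $q^2+q+1-\binom{q+2}{2}=\frac12 q(q-1)$ external lines, and each external line carries all $q+1$ of its points outside $H$ (constant replication $q+1$), all three design conditions hold and $\bfX_{2,2}^\rmT$ is an incidence matrix of a $\BIBD(\frac12 q(q-1),\frac12 q,1)$, consistent with~\eqref{equation.BIBD parameters}.

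The one genuinely nonroutine step is the incidence count showing that a hyperoval has no tangent lines; once this secant/external dichotomy is established, the block-triangular form, the parity of $q$, and the identification of $\bfX_{2,2}^\rmT$ all follow by bookkeeping built on two standard facts: distinct lines of the plane meet in a unique point, and every line through a point of $H$ is forced to be a secant.
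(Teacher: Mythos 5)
Your proof is correct and rests on the same key idea as the paper's: the double count of incidences between the hyperoval and the lines, which forces every line to be secant or exterior and hence yields the block-triangular form. The remaining bookkeeping (parity of $q$ via the $\tfrac12(q+2)$ secants through an exterior point, and the identification of $\bfX_{2,2}^\rmT$ as a $\BIBD(\tfrac12q(q-1),\tfrac12q,1)$) matches the paper's computations exactly, merely phrased geometrically where the paper manipulates the block identities $\bfX^\rmT\bfX=q\bfI+\bfJ$ and $\bfX\bfX^\rmT=q\bfI+\bfJ$.
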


\begin{proof}
Consider a $\BIBD(q^2+q+1,q+1,1)$ that contains $u$ vertices that are special in the sense that no three of them lie in a common block.
Without loss of generality, the first $u$ columns of the incidence matrix $\bfX$ correspond to these special vertices.
The total number of special vertex-block pairs is $\sum_{j=1}^{u}\sum_{i=1}^{b}\bfX(i,j)=\sum_{j=1}^{u}(q+1)=u(q+1)$.
At the same time, each of the $\binom{u}{2}$ pairs of distinct special vertices determine a unique block, and so
\begin{equation*}
u(q+1)=\sum_{i=1}^{b}\sum_{j=1}^{u}\bfX(i,j)\geq \binom{u}{2}2=u(u-1),
\end{equation*}
implying $u\leq q+2$.
Having $u=q+2$ gives equality above,
meaning all special vertex-block pairs occur in blocks containing exactly two special vertices.
That is, when the projective plane contains a hyperoval, there is an enumeration of its vertices and blocks so that its incidence matrix is of form~\eqref{equation.hyperoval decomposition} where $\bfX_{1,1}$ is the incidence matrix of a $\BIBD(q+2,2,1)$.
Conversely, when the projective plane has such an incidence matrix, its first $q+2$ vertices form a hyperoval since each block contains either $2$ or $0$ of them.

For the remaining conclusions, assume $\bfX_{1,1}$ is the incidence matrix of a $\BIBD(q+2,2,1)$.
Here, \eqref{equation.BIBD parameters} implies $\bfX_{1,1}$ is of size $\frac12(q+1)(q+2)\times(q+2)$,
implying the size of $\bfX_{2,2}$ is $\frac12q(q-1)\times(q^2-1)$.
Also, recall the dual of a projective plane is another projective plane, implying $\bfX\bfX^\rmT=q\bfI+\bfJ$ and so $\bfX_{2,2}^{}\bfX_{2,2}^\rmT=q\bfI+\bfJ$.
Thus, to show $\bfX_{2,2}^\rmT$ is the incidence matrix of $\BIBD(\frac12q(q-1),\frac12q,1)$ it suffices to show $\bfX_{2,2}^\rmT\bfone=\frac12q\bfone$.
To see this, note $\bfX_{1,1}\bfone=2\bfone$.
Also, the upper-right block of the equation $\bfX^\rmT\bfX=q\bfI+\bfJ$ gives $\bfX_{1,1}^\rmT\bfX_{1,2}^{}=\bfJ$,
where $\bfJ$ is $(q+2)\times(q^2-1)$.
Together, these facts imply
\begin{equation*}
(q+2)\bfone^\rmT
=\bfone^\rmT\bfJ=\bfone^\rmT\bfX_{1,1}^\rmT\bfX_{1,2}^{}
=2(\bfone^\rmT\bfX_{1,2}^{}).
\end{equation*}
Thus, $\bfone^\rmT\bfX_{1,2}^{}=\tfrac12(q+2)\bfone^\rmT$, which implies that $q$ is even.
Also, since $\bfone^\rmT\bfX=(q+1)\bfone^\rmT$ then
\begin{equation*}
(q+1)\bfone^\rmT
=\bfone^\rmT\bfX_{1,2}+\bfone^\rmT\bfX_{2,2}
=\tfrac12(q+2)\bfone^\rmT+\bfone^\rmT\bfX_{2,2}.
\end{equation*}
Thus, we indeed have $\bfX_{2,2}^\rmT\bfone=\frac12q\bfone$.
\end{proof}

The partition given in Lemma~\ref{lemma.hyperoval decomposition} has a geometric meaning:
the column partition indicates whether a vertex is in the hyperoval or not,
while the row partition indicates whether a block intersects the hyperoval in two or zero vertices,
namely whether the block is secant or exterior to the hyperoval.

The matrix $\bfX_{2,2}^\rmT$ given in Lemma~\ref{lemma.hyperoval decomposition} is the incidence matrix of a type of \textit{Denniston design}~\cite{Denniston69}.
As an example of this lemma, note the incidence matrix of the projective plane of order $2$ given in~\eqref{equation.6x4 and 7x7 BIBDs} is already in this form.
There, $\bfX_{2,2}$ is a $1\times 3$ matrix of ones, corresponding to a degenerate BIBD with $v=k=\lambda=1$ and $b=r=3$.

To prove our main result, we need an infinite family of matrices of form~\eqref{equation.hyperoval decomposition},
and thus need a general construction of hyperovals.
Recall that projective planes of order $q$ are only known to exist when $q$ is the power of a prime.
When coupled with the requirement that $q$ be even,
this means that all known constructions of hyperovals lie in projective planes of order $q=2^e$ for some $e\geq1$.
For the canonical projective plane of order $q=2^e$, the canonical hyperoval is set:
\begin{equation}
\label{equation.hyperoval construction}
\set{[t,t^2,1]: t\in\bbF_q}\cup\set{[0,1,0]}\cup\set{[1,0,0]}.
\end{equation}
No three of these vertices lie in a common block, since no three of the corresponding vectors are linearly dependent, a fact that follows from the corresponding $3\times 3$ determinants.

We form new ETFs from the duals of projective planes containing hyperovals, as well as special affine planes that they contain.
The requisite designs are produced by the following:
\begin{lemma}
\label{lemma.dual hyperoval decomposition}
If a projective plane of order $q$ contains a hyperoval then its dual has an incidence matrix of the form
\begin{equation}
\label{equation.dual hyperoval decomposition of projective}
\bfY=\left[\begin{array}{cc}\bfY_{1,1}&\bfY_{1,2}\\\bfzero&\bfY_{2,2}\end{array}\right]
\end{equation}
where $\bfY_{1,1}$ and $\bfY_{2,2}^\rmT$ are the incidence matrices of a $\BIBD(\frac12q(q-1),\frac12q,1)$ and $\BIBD(q+2,2,1)$,  respectively.
Moreover, removing any one of the last $q+2$ rows of $\bfY$ along with the $q+1$ columns it indicates produces an affine plane of order $q$ with an incidence matrix of the form
\begin{equation}
\label{equation.dual hyperoval decomposition of affine}
\bfZ=\left[\begin{array}{cc}\bfY_{1,1}&\bfZ_{1,2}\\\bfzero&\bfZ_{2,2}\end{array}\right]
\end{equation}
where $\bfZ_{2,2}^\rmT$ is the incidence matrix of a $\BIBD(q+1,2,1)$.
\end{lemma}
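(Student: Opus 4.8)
The plan is to obtain both decompositions directly from Lemma~\ref{lemma.hyperoval decomposition} by first dualizing and then deleting. For the first claim I would start with an incidence matrix $\bfX$ of the projective plane in the form \eqref{equation.hyperoval decomposition} guaranteed by Lemma~\ref{lemma.hyperoval decomposition}, so that $\bfX_{1,1}$ is an incidence matrix of a $\BIBD(q+2,2,1)$ and $\bfX_{2,2}^\rmT$ is an incidence matrix of a $\BIBD(\frac12q(q-1),\frac12q,1)$. Since the dual of a projective plane is again a projective plane, with incidence matrix $\bfX^\rmT$, transposing \eqref{equation.hyperoval decomposition} produces a lower block-triangular matrix; reversing the order of the two row-blocks and of the two column-blocks then turns this into the upper block-triangular form \eqref{equation.dual hyperoval decomposition of projective}, with $\bfY_{1,1}=\bfX_{2,2}^\rmT$, $\bfY_{2,2}=\bfX_{1,1}^\rmT$, and $\bfY_{1,2}=\bfX_{1,2}^\rmT$. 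At that point $\bfY_{1,1}$ and $\bfY_{2,2}^\rmT=\bfX_{1,1}$ are literally the two advertised designs, so the first claim follows with essentially no further computation.

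For the second claim I would read the rows of $\bfY$ as the blocks of the dual plane and its columns as the vertices of the dual plane, so that the last $q+2$ rows are exactly the dual blocks corresponding to the $q+2$ hyperoval points of the original plane. Deleting any one such row together with the $q+1$ columns in which it is nonzero is precisely the operation of removing a block of the dual projective plane along with the $q+1$ vertices it contains, which by the parallel-postulate argument of Section~II yields an affine plane of order $q$. What remains is to verify that this deletion respects the triangular block structure, that is, that it leaves $\bfY_{1,1}$ and the zero block intact, and to identify $\bfZ_{2,2}^\rmT$.

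The key geometric observation is that every block through a hyperoval point is secant; Lemma~\ref{lemma.hyperoval decomposition} already encodes this, since the zero block in \eqref{equation.hyperoval decomposition} records that no exterior block contains a hyperoval vertex. Consequently the chosen row of $\bfY$ is supported entirely within the second (secant) column-block, so the $q+1$ deleted columns avoid $\bfY_{1,1}$ entirely; deleting them together with the one row therefore only trims $\bfY_{1,2}$ and $\bfY_{2,2}$, leaving the form \eqref{equation.dual hyperoval decomposition of affine}. Finally, $\bfY_{2,2}^\rmT=\bfX_{1,1}$ is an incidence matrix of the $\BIBD(q+2,2,1)$, which is nothing but the complete design on the $q+2$ hyperoval points; removing one point together with all pairs through it leaves the complete design on the remaining $q+1$ points, namely a $\BIBD(q+1,2,1)$, so $\bfZ_{2,2}^\rmT$ is as claimed.

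I expect the main obstacle to be bookkeeping rather than conceptual depth: the delicate point is correctly translating ``a row of the dual incidence matrix'' into ``a block of the dual $=$ a vertex of the original $=$ a hyperoval point'' and then confirming that its support lands in the secant column-block, since this is exactly what guarantees that the deletion preserves the triangular form and does not disturb $\bfY_{1,1}$. Once that correspondence is pinned down, both parts reduce to transposing and restricting the designs already furnished by Lemma~\ref{lemma.hyperoval decomposition}.
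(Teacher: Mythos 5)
Your proposal is correct and follows essentially the same route as the paper's proof: transpose the decomposition of Lemma~\ref{lemma.hyperoval decomposition} and permute the two row- and column-blocks to get \eqref{equation.dual hyperoval decomposition of projective}, then invoke the block-removal construction of an affine plane from Section~II and observe that a hyperoval-vertex row is supported entirely in the secant column-block, so the deletion leaves $\bfY_{1,1}$ and the zero block intact and trims $\bfY_{2,2}$ (the complete pair design on $q+2$ points) down to the complete pair design on $q+1$ points. Your write-up is, if anything, slightly more explicit than the paper's about why the deleted columns avoid $\bfY_{1,1}$.
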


\begin{proof}
For~\eqref{equation.dual hyperoval decomposition of projective},
take the transpose of the decomposition given in Lemma~\ref{lemma.hyperoval decomposition} and then permute rows and columns.
For the remaining conclusions, recall from Section~II that removing any single block (row) of $\bfY$ along with the $q+1$ vertices (columns) it contains produces an affine plane of order $q$.
Choosing one of the last $q+2$ blocks in particular removes one row and $q+2$ columns from $\bfY_{2,2}$, removes one row from $\bfzero$, removes $q+1$ columns from $\bfY_{1,2}$ and leaves $\bfY_{1,1}$ untouched,
resulting in a matrix of form~\eqref{equation.dual hyperoval decomposition of affine}.
Moreover, since the columns of $\bfY_{2,2}$ indicate all pairs of $q+2$ rows,
the columns of $\bfZ_{2,2}$ indicate all pairs of the remaining $q+1$ rows,
meaning $\bfZ_{2,2}^\rmT$ is the incidence matrix of a $\BIBD(q+1,2,1)$.
\end{proof}

For example, the projective plane of order $2$ given in~\eqref{equation.6x4 and 7x7 BIBDs} contains a hyperoval since it has form \eqref{equation.hyperoval decomposition}.
Taking its transpose and then permuting rows and columns gives a $7\times 7$ incidence matrix of a projective plane that has form~\eqref{equation.dual hyperoval decomposition of projective}:
\begin{equation}
\setlength{\arraycolsep}{2pt}
\label{equation.6x4 and 7x7 hyperoval BIBDs}
\left[\begin{array}{r|rrrrrr}
1&0&0&1&1&0&0\\
1&0&1&0&0&1&0\\
1&1&0&0&0&0&1\\
\hline
0&1&1&1&0&0&0\\
0&1&0&0&1&1&0\\
0&0&1&0&1&0&1\\
0&0&0&1&0&1&1
\end{array}\right],
\quad
\left[\begin{array}{r|rrr}
1&1&0&0\\
1&0&1&0\\
1&0&0&1\\
\hline
0&1&1&0\\
0&1&0&1\\
0&0&1&1
\end{array}\right].
\end{equation}
The second matrix here is an example of an incidence matrix of an affine plane that has form~\eqref{equation.dual hyperoval decomposition of affine}.
It is obtained by removing the fourth row and columns $2$, $3$ and $4$ from the first matrix.

We form Steiner-like ETFs by using such BIBDs to embed the rows of two distinct (possibly complex) Hadamard matrices,
one of size $q+2$ and the other of size $q$.
As with Steiner ETFs, we remove a row from a Hadamard matrix of size $q+2$ to obtain a $(q+1)\times(q+2)$ matrix $\bfS$ whose columns $\set{\bfs_l}_{l=1}^{q+2}$ form a unimodular simplex for $\bbF^{q+1}$.
See~\eqref{equation.3x4 unimodular simplex} for an example of $\bfS$ for $q=2$.
We also take the negative of the last row of a $q\times q$ Hadamard matrix and attach it to its bottom to produce a $(q+1)\times q$ matrix $\bfC$.
For example, when $q=2$,
\begin{equation}
\setlength{\arraycolsep}{2pt}
\label{equation.3x2 unimodular cosimplex}
\bfC
=\left[\begin{array}{rr}
+&+\\
+&-\\
-&+
\end{array}\right].
\end{equation}
The columns of such a matrix $\bfC$ have the following properties:

\begin{definition}
\label{definition.cosimplex}
For any $r\geq 3$, a corresponding \textit{unimodular cosimplex} is a sequence of vectors $\set{\bfc_l}_{l=1}^{r-1}$ in $\bbF^r$ with the property that each $\bfc_l$ has unimodular entries, the last two entries of any $\bfc_l$ sum to zero, and $\abs{\ip{\bfc_l}{\bfc_{l'}}}=1$ for all $l\neq l'$.
\end{definition}

To form new ETFs,
let $\bfZ$ be the incidence matrix of an affine plane of order $q$ of form~\eqref{equation.dual hyperoval decomposition of affine},
use the first \smash{$\frac12q(q-1)$} of the operators $\set{\bfE_j}_{j=1}^{q^2}$ (cf.\ Definition~\ref{definition.embedding}) to embed $\bfS$,
and use the remaining operators to embed $\bfC$.
For example, using the first column of the $6\times 4$ affine plane in~\eqref{equation.6x4 and 7x7 hyperoval BIBDs} to embed~\eqref{equation.3x4 unimodular simplex} and using the remaining columns to embed~\eqref{equation.3x2 unimodular cosimplex}  gives
$\left[\begin{array}{rrrr}\bfE_1\bfS&\bfE_2\bfC&\bfE_3\bfC&\bfE_4\bfC\end{array}\right]$, namely
\begin{equation}
\setlength{\arraycolsep}{2pt}
\label{equation.5x10 ETF from hyperoval}
\left[\begin{array}{rrrr|rrrrrr}
+&-&-&+&+&+&0&0&0&0\\
+&+&-&-&0&0&+&+&0&0\\
+&-&+&-&0&0&0&0&+&+\\
\hline
0&0&0&0&+&-&+&-&0&0\\
0&0&0&0&-&+&0&0&+&-\\
0&0&0&0&0&0&-&+&-&+
\end{array}\right].
\end{equation}
Note that by inspection, the $10$ columns of this matrix are equiangular with coherence $\tfrac13$.
However, these vectors do not form an ETF for $\bbR^6$: the rows of~\eqref{equation.5x10 ETF from hyperoval} are not orthogonal,
and moreover letting $n=10$ and $d=6$ in~\eqref{equation.Welch bound} does not yield $\frac13$.
Rather, by the final statement of Lemma~\ref{lemma.tight for subspace}, they form an ETF for a $5$-dimensional orthogonal complement of $(0,0,0,1,1,1)$.

When applied to the affine plane produced by Lemma~\ref{lemma.dual hyperoval decomposition} from the canonical hyperoval and projective plane of order $n=4$, this same approach yields the $76$-vector ETF for a $19$-dimensional subspace of $\bbC^{20}$, cf.\ Figure~\ref{figure.19x76}.
It can also be applied to projective planes of form~\eqref{equation.dual hyperoval decomposition of projective}.
For example, using the $7\times 7$ projective plane in~\eqref{equation.6x4 and 7x7 hyperoval BIBDs} to embed~\eqref{equation.3x4 unimodular simplex} and~\eqref{equation.3x2 unimodular cosimplex} produces
$\left[\begin{array}{rrrrrrr}\bfE_1\bfS&\bfE_2\bfC&\bfE_3\bfC&\bfE_4\bfC&\bfE_5\bfC&\bfE_6\bfC&\bfE_7\bfC\end{array}\right]$,
a $16$-vector ETF for a $6$-dimensional orthogonal complement of $(0,0,0,1,1,1,1)$ in $\bbR^7$.
We now formally prove these facts.

\begin{figure*}
\begin{equation*}
\bfPhi=\left[\begin{array}{cccccc|cccccccccc}
a&a& & & & & & & & & &f& & &f& \\
 & &a&a& & & & & & &f& & & & &f\\
 & & & &a&a& & & & & & &f&f& & \\
b& &b& & & & & & &f& & & &g& & \\
 &b& & &b& & &f& & & & & & & &g\\
 & & &b& &b& & &f& & & & & &g& \\
c& & & & &c&f& & & & & & & & &h\\
 &c&c& & & & & &g& & & &g& & & \\
 & & &c&c& & & & &g& &g& & & & \\
d& & &d& & & &g& & & & &h& & & \\
 &d& & & &d& & & &h&g& & & & & \\
 & &d& &d& &g& & & & & & & &h& \\
e& & & &e& & & &h& &h& & & & & \\
 &e& &e& & &h& & & & & & &h& & \\
 & &e& & &e& &h& & & &h& & & & \\
\hline
 & & & & & &i&i&i&i& & & & & & \\
 & & & & & &j& & & &i&i&i& & & \\
 & & & & & & &j& & &j& & &i&i& \\
 & & & & & & & &j& & &j& &j& &i\\
 & & & & & & & & &j& & &j& &j&j
\end{array}\right],
\setlength{\arraycolsep}{2pt}
\
\begin{array}{rcl}
\omega&=&\exp(\frac{2\pi\rmi}6),
\\ \ \\
\left[\begin{array}{r}a\\b\\c\\d\\e\end{array}\right]
&=&\left[\begin{array}{cccccc}
1&\omega^2&\omega^4&       1&\omega^2&\omega^4\\
1&\omega^4&\omega^2&       1&\omega^4&\omega^2\\
1&       1&       1&\omega^3&\omega^3&\omega^3\\
1&\omega^2&\omega^4&\omega^3&\omega^5&\omega^1\\
1&\omega^4&\omega^2&\omega^3&\omega^1&\omega^5
\end{array}\right],
\\\ \\
\left[\begin{array}{r}f\\g\\h\\i\\j\end{array}\right]
&=&\left[\begin{array}{rrrr}
 1& 1& 1& 1\\
 1&-1& 1&-1\\
 1& 1&-1&-1\\
 1&-1&-1& 1\\
-1& 1& 1&-1
\end{array}\right].
\end{array}
\end{equation*}
\caption[LoF entry]{
\label{figure.19x76}
A complex ETF of $n=76$ vectors for the $19$-dimensional subspace of $\bbC^{20}$ that consists of vectors whose last $5$ entries sum to zero. It was recently shown that no real ETF with parameters $n=76$, $d=19$ exists~\cite{AzarijaM15,Yu15}.
To increase readability, we denote the rows of a $5\times 6$ and $5\times 4$ unimodular simplex and cosimplex by the first $10$ letters of the alphabet.
Blank entries denote rows of zeros of the appropriate size.
This ETF is formed in the manner of Theorem~\ref{theorem.main result}:
a hyperoval in a projective plane of order $4$ decomposes a certain affine plane, cf.\ Lemma~\ref{lemma.dual hyperoval decomposition};
this affine plane is then used to embed $6$ copies of the simplex and $14$ copies of the cosimplex into $\bbC^{20}$.
The first $36$ vectors form a known Steiner ETF for $\bbC^{15}$ arising from a Denniston design~\cite{FickusMT12}.
The last $40$ of these vectors are real.

\quad
In detail, we form a projective plane of order $q=2^2=4$ by identifying $\bbF_q^3$ with the additive group of $\bbF_{q^3}=\bbF_{64}=\bbZ_2(\alpha)$,
where $\alpha$ is a root of the primitive polynomial \smash{$\beta^6+\beta+1$} over $\bbZ_2$~\cite{HansenM92}.
For the sake of simplicity, we performed field calculations in MATLAB using an isomorphism between $\bbF_{64}$ and certain $6\times 6$ matrices over $\bbZ_2$~\cite{Wardlaw94}.
The set of vertices in our projective plane is  \smash{$\calV=\bbF_{64}^\times/\bbF_{4}^\times=\langle\alpha\rangle/\langle\alpha^{21}\rangle\cong\bbZ_{21}$},
and all blocks correspond to cyclic shifts of the Singer difference set
$\set{i\in\bbZ_{21}: 0=\tr(\alpha^i)=\alpha^i+\alpha^{4i}+\alpha^{16i}}=\set{3,6,7,12,14}$.
Here, the canonical hyperoval $\set{0,1,2,3,5,14}$ is obtained by taking the logarithm base $\alpha$ of the vertices $\set{t+t^2\alpha+\alpha^2: t\in\bbF_4}\cup\set{1}\cup\set{\alpha}$ given in~\eqref{equation.hyperoval construction}.
This hyperoval provides a $21\times 21$ incidence matrix of form~\eqref{equation.hyperoval decomposition}.
Permuting its dual (transpose) gives a matrix of form~\eqref{equation.dual hyperoval decomposition of projective}.
Removing a ``hyperoval" row and its corresponding columns produces the $20\times 16$ affine plane of form~\eqref{equation.dual hyperoval decomposition of affine} that we used above.
}
\end{figure*}

\begin{theorem}
\label{theorem.main result}
For a projective plane of order $q$ that contains a hyperoval,
let \smash{$\set{\bfE_j}_{j=1}^{q^2}$} be embeddings arising from an affine plane of form~\eqref{equation.dual hyperoval decomposition of affine},
cf.\ Definition~\ref{definition.embedding}.
Let $\set{\bfs_l}_{l=1}^{q+2}$ and $\set{\bfc_l}_{l=1}^{q}$ be a unimodular simplex and cosimplex for $\bbF^{q+1}$, respectively,
cf.\ Definition~\ref{definition.cosimplex}.
Then
\begin{equation}
\label{equation.main result affine}
\set{\bfE_j\bfs_l}_{l=1,}^{q+2}\,_{j=1}^{\frac12q(q-1)}\cup\set{\bfE_j\bfc_l}_{l=1,}^{q}\,_{j=\frac12q(q-1)+1}^{q^2}
\end{equation}
is a $q(q^2+q-1)$-vector ETF for the $(q^2+q-1)$-dimensional subspace of $\bbF^{q(q+1)}$ that consists of those vectors whose last \smash{$q+1$} entries sum to zero.

Moreover, if we instead let \smash{$\set{\bfE_j}_{j=1}^{q^2+q+1}$} be embeddings arising from a projective plane of form~\eqref{equation.dual hyperoval decomposition of projective}, then
\begin{equation}
\label{equation.main result projective}
\set{\bfE_j\bfs_l}_{l=1,}^{q+2}\,_{j=1}^{\frac12q(q-1)}\cup\set{\bfE_j\bfc_l}_{l=1,}^{q}\,_{j=\frac12q(q-1)+1}^{q^2+q+1}
\end{equation}
is a $q^2(q+2)$-vector ETF for the $q(q+1)$-dimensional subspace of \smash{$\bbF^{q^2+q+1}$} of vectors whose last \smash{$q+2$} entries sum to zero.
\end{theorem}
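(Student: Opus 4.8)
The plan is to verify the three hypotheses packaged in Lemma~\ref{lemma.tight for subspace}: that the $n=q(q^2+q-1)$ vectors in \eqref{equation.main result affine} lie in the stated subspace $\bbH$, that they are equiangular, and that their frame operator equals $a\bfPi$ with $\bfPi$ the orthogonal projection onto $\bbH$. First, I would fix each cosimplex embedding $\bfE_j$ (for $j>\tfrac12q(q-1)$) so that it carries the last two coordinates of $\bbF^{q+1}$ --- the two entries of every $\bfc_l$ that sum to zero, per Definition~\ref{definition.cosimplex} --- onto the two bottom rows on which column $j$ of $\bfZ$ is supported, which are exactly two since each column of $\bfZ_{2,2}$ has weight $2$. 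With this convention, the last $q+1$ coordinates of $\bfE_j\bfc_l$ sum to those two entries, namely zero, while each simplex vector $\bfE_j\bfs_l$ is supported entirely on the top $q^2-1$ rows and so has vanishing bottom-coordinate sum; hence all $n$ vectors lie in $\bbH$, and since the simplex and cosimplex entries are unimodular, each has squared norm $r=q+1$.

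Next I would treat the off-diagonal inner products between two frame vectors in two cases. When the two vectors share a vertex ($j=j'$), the isometry relation $\bfE_j^*\bfE_j=\bfI$ collapses the inner product to $\ip{\bfs_l}{\bfs_{l'}}$ or $\ip{\bfc_l}{\bfc_{l'}}$ with $l\neq l'$, each of unit modulus by the defining simplex and cosimplex properties. When $j\neq j'$, the two vertices lie in exactly $\lambda=1$ common block of the affine plane, so the supports of $\bfE_j$ and $\bfE_{j'}$ overlap in a single coordinate and the inner product is a product of two unimodular entries. In every case the modulus is $1$, so the system is equiangular with $w=1$.

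The crux is the tight-frame identity. Expanding $\bfPhi\bfPhi^*=\sum_{j\le\frac12q(q-1)}\bfE_j(\bfS\bfS^*)\bfE_j^*+\sum_{j>\frac12q(q-1)}\bfE_j(\bfC\bfC^*)\bfE_j^*$, I would use $\bfS\bfS^*=(q+2)\bfI$ (the rows of $\bfS$ are orthogonal Hadamard rows) together with the fact that $\bfC\bfC^*$ equals $q\bfI$ except that its $(q,q+1)$ and $(q+1,q)$ entries are each $-q$, since the appended row of $\bfC$ is the negative of the $q$th --- a single rank-two correction confined to the last two coordinates. The diagonal terms $\sum_j\bfE_j\bfE_j^*$ are read off from the row sums of the decomposition \eqref{equation.dual hyperoval decomposition of affine}: each top row receives $\tfrac12q$ ones from the simplex columns $\bfY_{1,1}$ and $\tfrac12q$ from the cosimplex columns $\bfZ_{1,2}$, while each bottom row receives all $q$ of its ones from $\bfZ_{2,2}$. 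The essential combinatorial input, from Lemma~\ref{lemma.dual hyperoval decomposition}, is that $\bfZ_{2,2}^\rmT$ is the incidence matrix of a $\BIBD(q+1,2,1)$: the cosimplex vertices are therefore in bijection with the $\binom{q+1}{2}$ pairs of bottom rows, so summing the rank-two corrections over all cosimplex vertices deposits $-q$ in every off-diagonal entry of the bottom block. Collecting terms, the top and bottom diagonal contributions both equal $q(q+1)$ and the only off-diagonal contribution is $-q$ on the bottom block, yielding $\bfPhi\bfPhi^*=q(q+1)\bfI-q\bfJ_{\mathrm{b}}$, where $\bfJ_{\mathrm{b}}$ is the all-ones matrix on the $q+1$ bottom coordinates. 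Since the projection onto $\bbH$ is $\bfPi=\bfI-\tfrac1{q+1}\bfJ_{\mathrm{b}}$, this equals $q(q+1)\bfPi$, so Lemma~\ref{lemma.tight for subspace} gives a $q(q+1)$-tight frame, hence an ETF, for $\bbH$; its dimension is then forced to be $q^2+q-1$ through $a=rn/d$.

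Finally, the projective statement \eqref{equation.main result projective} follows from the identical argument applied to the decomposition \eqref{equation.dual hyperoval decomposition of projective}, the only changes being that there are now $q+2$ bottom (secant) rows and $\binom{q+2}{2}$ cosimplex vertices, and that each block meets $q+1$ vertices; the same bookkeeping then produces $\bfPhi\bfPhi^*=q(q+2)\bfI-q\bfJ_{\mathrm{b}}=q(q+2)\bfPi$ on the $q(q+1)$-dimensional subspace of bottom-sum-zero vectors. I expect the tightness computation to be the main obstacle: one must simultaneously track the diagonal row-sum contributions coming from the two different block sizes and confirm that the cosimplex cross-terms assemble into the full off-diagonal all-ones pattern on the bottom coordinates --- precisely the step where the $\BIBD(q+1,2,1)$ (respectively $\BIBD(q+2,2,1)$) structure of the secant blocks is indispensable.
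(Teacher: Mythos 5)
Your verification that the vectors lie in the stated hyperplane and that all pairwise inner products have unit modulus coincides with the paper's steps (i) and (ii): the same isometry property $\bfE_j^*\bfE_j^{}=\bfI$, the same observation that two distinct blocks share exactly one vertex so that $\bfE_j^*\bfE_{j'}^{}$ is a rank-one outer product of standard basis vectors, and the same (correctly made explicit by you) convention that each cosimplex embedding must send the last two coordinates of $\bbF^{q+1}$ to the two secant rows. Where you genuinely diverge is the tightness step. The paper never computes the frame operator: having shown the $n=q(q^2+q-1)$ vectors have common squared norm $q+1$, lie in a hyperplane of dimension $d=q^2+q-1$, and have coherence $\tfrac1{q+1}$, it observes that $\tfrac1{q+1}$ is exactly the Welch bound for this $(n,d)$ and invokes the final statement of Lemma~\ref{lemma.tight for subspace}, so equality in the Welch bound delivers tightness for free. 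Your route instead assembles $\bfPhi\bfPhi^*$ directly and matches it to $q(q+1)\bfPi$; the combinatorial bookkeeping there is right (the $\BIBD(q+1,2,1)$ structure of $\bfZ_{2,2}^\rmT$ does deposit $-q$ in each off-diagonal entry of the bottom block exactly once), apart from one internal slip: the bottom diagonal entries come out to $q\cdot q=q^2$, not $q(q+1)$, which is precisely what makes the total equal $q(q+1)\bfI-q\bfJ_{\mathrm{b}}$ with $\bfJ_{\mathrm{b}}$ carrying its own diagonal.

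The one genuine gap is your identity for $\bfC\bfC^*$. The claim that $\bfC\bfC^*$ equals $q\bfI$ up to a $-q$ in the $(q,q+1)$ and $(q+1,q)$ entries requires the first $q$ rows of $\bfC$ to be mutually orthogonal. That holds for the concrete construction (a $q\times q$ Hadamard matrix with its negated last row appended), but it is not part of Definition~\ref{definition.cosimplex}, which constrains only the \emph{columns} of $\bfC$: unimodular entries, last two entries summing to zero, and column inner products of modulus one. Those conditions control $\bfC^*\bfC$ only up to unimodular off-diagonal phases and say nothing direct about the row Gram matrix $\bfC\bfC^*$, so the justification you give (``the appended row is the negative of the $q$th'') accounts for the last row and column of $\bfC\bfC^*$ but not for the vanishing of the remaining off-diagonal entries. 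As written, your tightness argument therefore proves the theorem only for Hadamard-built cosimplices rather than under the stated hypothesis. The repair is either to assume row-orthogonality of the top $q\times q$ block of $\bfC$ (harmless for every instance the paper uses) or, more cleanly, to adopt the paper's Welch-bound shortcut, which needs no information about $\bfC\bfC^*$ whatsoever. The same comments apply verbatim to your treatment of the projective case, whose parameter bookkeeping is otherwise correct.
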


\begin{proof}
We prove this result in the affine case; the proof in the projective case is similar.
When $n=q(q^2+q-1)$ and $d=q^2+q-1$, the Welch bound~\eqref{equation.Welch bound} is $\frac1{q+1}$.
Moreover, by definition, each embedding $\bfE_j$  maps an orthonormal basis of its domain to one of its range.
This means each $\bfE_j$ is an isometry, namely $\bfE_j^*\bfE_j^{}=\bfI$.
Since each $\bfs_l$ and $\bfc_l$ has $q+1$ entries, all unimodular,
this implies $\norm{\bfE_j\bfs_l}^2=\norm{\bfs_l}^2=q+1$ and similarly $\norm{\bfE_j\bfc_l}^2=q+1$.
As such, to prove the first claim using Lemma~\ref{lemma.tight for subspace}, it thus suffices to show that (i) each of the vectors in~\eqref{equation.main result affine} lies in a hyperplane of $\bbF^{q(q+1)}$ and that (ii) the inner product of any two of these vectors has unit modulus.

For (i), we claim all vectors in~\eqref{equation.main result affine} lie in $\bfd^\perp$ where
\begin{equation*}
\setlength{\arraycolsep}{1pt}
\bfd\in\bbF^{q(q+1)},
\quad
\bfd(i)=\left\{\begin{array}{ccl}0,&\ 1&\leq i\leq q^2-1,\\1,&\ q^2&\leq i\leq q(q+1).\end{array}\right.
\end{equation*}
Indeed, for any $j$ with $1\leq j\leq\frac12q(q-1)$, the form~\eqref{equation.dual hyperoval decomposition of affine} of our affine plane gives $(\bfE_j\bfs_l)(i)=0$ for all $l$ whenever $q^2\leq i\leq q(q+1)$, and so $\ip{\bfd}{\bfE_j\bfs_l}=0$.
In the remaining case where $\frac12q(q-1)+1\leq j\leq q^2+q+1$, the form of our affine plane along with a property of the cosimplex gives
\begin{equation*}
\ip{\bfd}{\bfE_j\bfc_l}
=\sum_{i=q^2}^{q(q+1)}(\bfE_j\bfc_l)(i)
=\bfc_l(q)+\bfc_l(q+1)
=0.
\end{equation*}

For (ii), since each $\bfE_j$ is an isometry and $\set{\bfs_l}_{l=1}^{q+2}$ is a unimodular simplex,
$\abs{\ip{\bfE_j\bfs_l}{\bfE_j\bfs_{l'}}}=\abs{\ip{\bfs_l}{\bfs_{l'}}}=1$ whenever $l\neq l'$.
Similarly, since $\set{\bfc_l}_{l=1}^{q+2}$ is a unimodular cosimplex,
$\abs{\ip{\bfE_j\bfc_l}{\bfE_j\bfc_{l'}}}=\abs{\ip{\bfc_l}{\bfc_{l'}}}=1$ whenever $l\neq l'$.
A different argument is required for inner products of the form
\begin{equation}
\label{equation.proof of main result 1}
\ip{\bfE_j\bfs_l}{\bfE_{j'}\bfs_{l'}},\quad
\ip{\bfE_j\bfs_l}{\bfE_{j'}\bfc_{l'}},\quad
\ip{\bfE_j\bfc_l}{\bfE_{j'}\bfc_{l'}},
\end{equation}
when $j\neq n'$.
There, the fact that any two distinct blocks in our BIBD have exactly one vertex in common implies that exactly one column of $\bfE_j$ appears as a column of $\bfE_{j'}$ while all other columns have disjoint support.
This implies $\bfE_j^*\bfE_{j'}^{}$ is the outer product $\bfdelta_i^{}\bfdelta_{i'}^*$ of two standard basis elements of $\bbF^r$,
cf.\ Lemma~2.1 of~\cite{FickusJMP16}.
In particular,
\begin{equation*}
\ip{\bfE_j\bfs_l}{\bfE_{j'}\bfs_{l'}}
=\ip{\bfs_l}{\bfE_j^*\bfE_{j'}^{}\bfs_{l'}}
=\ip{\bfs_l}{\bfdelta_i\bfdelta_{i'}^*\bfs_{l'}}
=\overline{\bfs_l(i)}\bfs_{l'}(i').
\end{equation*}
Similarly, the other two inner products in~\eqref{equation.proof of main result 1} are $\overline{\bfs_l(i)}\bfc_{l'}(i')$ and $\overline{\bfc_l(i)}\bfc_{l'}(i')$.
Since the entries of every $\bfs_l$ and $\bfc_l$ are unimodular, these inner products have unit modulus.
\end{proof}

We have some remarks about this result.
First, recall that all known constructions of hyperovals lie in projective planes of order $q=2^e$ for some $e\geq1$.
Further recall that for any such $q$, the canonical projective plane of order $q$ contains the hyperoval~\eqref{equation.hyperoval construction}.
In this case, we construct the requisite unimodular simplex and cosimplex from Hadamard matrices of size $q+2=2^e+2$ and $q=2^e$, respectively.
The second of these two Hadamard matrices can always be chosen to be real,
obtained for example by taking the Kronecker product of the canonical $2\times 2$ Hadamard matrix with itself $e$ times.
However, $q+2=2^e+2$ is not divisible by $4$ except when $e=1$,
meaning our first Hadamard matrix is necessarily complex except when $q=2$.
That is, we obtain our (complex) unimodular simplex by removing a row from a discrete Fourier transform of size $q+2$, for example.
Together, these facts imply:
\begin{corollary}
For any $e\geq 1$, applying Theorem~\ref{theorem.main result} to the canonical projective plane of order $q=2^e$ gives an $n$-vector ETF for $\bbC^d$ where $n$ and $d$ are given by~\eqref{equation.new ETF parameters}.
This ETF is real when $e=1$, and is otherwise complex.
\end{corollary}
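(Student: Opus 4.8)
The plan is to verify that the canonical projective plane of order $q=2^e$ meets the hypotheses of Theorem~\ref{theorem.main result} and then substitute $q=2^e$ into its conclusion. First I would invoke the hyperoval~\eqref{equation.hyperoval construction}: since no three of its $q+2$ points are collinear, as witnessed by the nonvanishing of the relevant $3\times 3$ determinants, the canonical projective plane of order $q$ contains a hyperoval, so Lemma~\ref{lemma.dual hyperoval decomposition} supplies an affine plane of the form~\eqref{equation.dual hyperoval decomposition of affine} and hence the embeddings $\set{\bfE_j}_{j=1}^{q^2}$ required by the theorem.

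Next I would construct the unimodular simplex $\set{\bfs_l}_{l=1}^{q+2}$ and cosimplex $\set{\bfc_l}_{l=1}^{q}$ for $\bbF^{q+1}$. For the cosimplex, I would take the Sylvester (Kronecker-power) Hadamard matrix of order $q=2^e$, which is real and always exists, and form $\bfC$ by appending the negation of its last row; the last two entries of each column then sum to zero, while a short computation shows that the inner product of two distinct columns reduces to a single unimodular term coming from the appended row, so Definition~\ref{definition.cosimplex} is met, and this cosimplex is real for every $e$. For the simplex, I would delete one row from a Hadamard matrix of order $q+2=2^e+2$. When $e=1$ this order equals $4$ and a real Hadamard matrix exists, so the simplex may be taken real; for $e\geq 2$ I would instead use the order-$(q+2)$ discrete Fourier transform, a complex Hadamard matrix, to obtain a complex simplex.

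With both objects in hand, Theorem~\ref{theorem.main result} (affine case) immediately produces a $q(q^2+q-1)$-vector ETF for the $(q^2+q-1)$-dimensional subspace of $\bbF^{q(q+1)}$ consisting of vectors whose last $q+1$ entries sum to zero. Substituting $q=2^e$ gives $d=q^2+q-1=2^{2e}+2^e-1$ and $n=q(q^2+q-1)=2^e(2^{2e}+2^e-1)$, matching~\eqref{equation.new ETF parameters}; identifying this $d$-dimensional subspace isometrically with $\bbC^d$ turns it into an ETF for $\bbC^d$.

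Finally I would settle the reality dichotomy, which is the only substantive point. Each entry of the resulting ETF is either zero or an entry of $\bfS$ or $\bfC$ placed by the $\set{0,1}$-valued embeddings, so the ETF is real precisely when both the simplex and cosimplex are real. As the cosimplex is always real, the question reduces to whether a real Hadamard matrix of order $q+2=2^e+2$ exists. The main obstacle is this divisibility check: a real Hadamard matrix of order $m>2$ forces $m\equiv 0\pmod 4$, and since $2^e+2=2(2^{e-1}+1)$ is divisible by $4$ exactly when $2^{e-1}+1$ is even, i.e.\ only when $e=1$, the simplex---and therefore the whole ETF---can be real only in the case $e=1$ and must carry genuinely complex entries whenever $e\geq 2$.
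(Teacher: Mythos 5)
Your proposal is correct and follows essentially the same route as the paper: verify that the canonical plane of order $q=2^e$ contains the hyperoval~\eqref{equation.hyperoval construction}, build the cosimplex from a real Sylvester Hadamard matrix of order $2^e$ and the simplex from a Hadamard matrix of order $2^e+2$ (the DFT when $e\geq 2$), and observe that since $2^e+2\equiv 2\pmod 4$ for $e\geq 2$, no real Hadamard matrix of that order exists, so the construction is real exactly when $e=1$. This is precisely the argument the paper gives in the paragraph preceding the corollary.
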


A second remark: while the ETF~\eqref{equation.main result affine} arising from an affine plane of order $q=2^e$ is new,
the ETF~\eqref{equation.main result projective} arising from a projective plane may not be.
To be precise, two $n$-vector ETFs for $\bbH_d$ are \textit{equivalent} if their synthesis operators satisfy
\begin{equation*}
\bfPhi_1=\bfU\bfPhi_2\bfD\bfP
\end{equation*}
for some unitary operator $\bfU$ on $\bbH_d$ and some $n\times n$ matrices $\bfD$ and $\bfP$ where $\bfD$ is diagonal with unimodular diagonal entries and $\bfP$ is a permutation.
Two ETFs that are constructed in different ways may, in fact, be equivalent.
For example, in~\cite{JasperMF14}, it is shown that every harmonic ETF arising from a McFarland difference set~\cite{DingF07} is equivalent to a special type of Steiner ETF arising from an affine geometry~\cite{FickusMT12}.
The ETF~\eqref{equation.main result projective} has $n=q^2(q+2)$ and $d=q(q+1)$, and so it might be equivalent to Steiner ETFs from affine planes.
We do not know:
determining whether two ETFs are equivalent is similar to determining whether two graphs are isomorphic,
and we leave a deeper investigation of~\eqref{equation.main result projective} for future work.

For any $q=2^e$ where $e>1$, the ETF~\eqref{equation.main result affine} is new~\cite{FickusM15}.
Indeed, strongly regular graphs corresponding to real ETFs with these parameters are not known to exist~\cite{Brouwer07,Brouwer15},
and have been shown to not exist when $q=4$~\cite{AzarijaM15,Yu15}.
Moreover, no ETF with these parameters can be harmonic since
\begin{equation*}
\frac{d(d-1)}{n-1}=q+1-\frac{2q+3}{(q+1)^2}
\end{equation*}
is not an integer;
for harmonic ETFs, this quantity is the number of ways a nonzero element of the group can be written as a difference of members of the difference set.
Also, no ETF with these parameters is a Steiner ETF, since by Theorem~2 of~\cite{FickusMT12}, the corresponding $\BIBD(v,k,1)$ would have  $b=q^2+q-1$ and $r=q+1$ and so
\begin{equation*}
k=\frac{r(r-1)}{r^2-b}=q-1+\frac2{q+2}
\end{equation*}
which is not an integer.

Another remark about Theorem~\ref{theorem.main result}:
note~\eqref{equation.dual hyperoval decomposition of affine} contains the incidence matrix $\bfY_{1,1}$ of a $\BIBD(\frac12q(q-1),\frac12q,1)$,
and is also contained in~\eqref{equation.dual hyperoval decomposition of projective}.
This means the Steiner ETF $\set{\bfE_j\bfs_l}_{l=1,}^{q+2}\,_{j=1}^{\frac12q(q-1)}$ arising from this Denniston design is contained in the ETF~\eqref{equation.main result affine}, which in turn is contained in the ETF~\eqref{equation.main result projective}.
To our knowledge, this is the first time three nontrivial \textit{nested} ETFs have been discovered.
This possibility suggests a new program for discovering ETFs: given an existing ETF, find other ETFs it contains as well as other ETFs that contain it.

A final remark:
note that by Lemma~\ref{lemma.tight for subspace}, the frame operator of~\eqref{equation.main result affine} is necessarily a scalar multiple of a projection onto the subspace of vectors in $\bbF^{q(q+1)}$ whose last $q+1$ entries sum to zero.
In fact, this condition implies the construction of Theorem~\ref{theorem.main result} does not generalize to any BIBDs that are not affine or projective planes.
To be precise, let
\begin{equation}
\label{equation.general form of BIBD}
\bfX=\left[\begin{array}{cc}\bfX_{1,1}&\bfX_{1,2}\\\bfzero&\bfX_{2,2}\end{array}\right]
\end{equation}
be the incidence matrix of any $\BIBD(v,k,1)$ where $\bfX_{1,1}$ and $\bfX_{2,2}^\rmT$ are the incidence matrices of some $\BIBD(v_0,k_0,1)$ and $\BIBD(b_0,2,1)$, respectively.
Here, \eqref{equation.BIBD properties} and~\eqref{equation.BIBD parameters} imply
\begin{equation*}
k_0=k-\frac{k(k+1)}{2r},
\quad
v_0=r(k_0-1)+1,
\quad
b_0=k+1.
\end{equation*}
Use the first $v_0$ columns of $\bfX$ to embed an $r\times(r+1)$ unimodular simplex $\bfS$, and use the remaining columns to embed an $r\times(r-1)$ unimodular cosimplex $\bfC$ \`{a} la Theorem~\ref{theorem.main result}.
Let $\bfPhi$ be the resulting $b\times[v(r-1)+2v_0]$ synthesis operator.
The columns of $\bfPhi$ are equiangular; we want to know when they form a tight frame for their span.
Since $\bfS\bfS^*=(r+1)\bfI$,
\begin{equation*}
\bfC\bfC^*=(r-1)\left[\begin{array}{crr}\bfI&\bfzero&\bfzero\\\bfzero&1&-1\\\bfzero&-1&1\end{array}\right],
\end{equation*}
the structure of $\bfX$ implies the frame operator is
\begin{equation*}
\bfPhi\bfPhi^*=\left[\begin{array}{cc}
k(r+1-\frac{k+1}r)\bfI&\bfzero\\
\bfzero&(r-1)[(k+1)\bfI-\bfJ]
\end{array}\right].
\end{equation*}
Computing the spectrum of $\bfPhi\bfPhi^*$,
we see it is a multiple of an orthogonal projection if and only if $k(r+1-\frac{k+1}r)=(k+1)(r-1)$,
or equivalently, $0=(r-k)[r-(k+1)]$.
By Lemma~\ref{lemma.tight for subspace}, we thus see these equiangular vectors form an ETF for their span if and only if either $r=k$ or $r=k+1$, namely when the underlying BIBD is either a projective plane or affine plane of form~\eqref{equation.general form of BIBD}.

\section{Flat ETFs}

A matrix $\bfPhi$ is \textit{flat} if all of its entries have constant modulus.
Flat waveforms are often used in real-world applications such as radar and wireless communication since they allow a transmitted waveform to have the maximal amount of energy subject to the transmitter's power limit.
That is, mathematically speaking, flat vectors provide the largest possible ratio of $2$-norm to $\infty$-norm.
In light of this, it is natural to investigate ETFs that have flat synthesis operators.

Previous work on this topic has focused on flat $m\times n$ matrices whose columns form an ETF for $\bbF^m$.
In particular, the synthesis operator of a harmonic ETFs is flat, being a submatrix of a character table (discrete Fourier transform) of a finite abelian group.
Some of these ETFs have recently been used to construct minimally-coherent vectors in a regime where no ETF can exist~\cite{BodmannH15}.
Steiner ETFs, on the other hand, are very sparse.
Nevertheless, whenever the underlying BIBD is resolvable, one can rotate the synthesis operator of a Steiner ETF so as to make it flat~\cite{JasperMF14}.

We begin this section by generalizing the method of~\cite{JasperMF14} so as to apply it to the ETF in~\eqref{equation.main result affine}.
This produces a flat $m\times n$ matrix $\bfPhi$ whose columns form an ETF for a proper subspace $\bbH_d$ of $\bbF^m$.
We then consider such ETFs in general, discussing a connection between them and \textit{supersaturated designs},
as well as how some of them imply the existence of other ETFs.

From Section~II, recall that parallelism is an equivalence relation on the blocks of an affine plane,
and that this implies its blocks can be arranged as $q+1$ groupings of $q$ disjoint blocks.
In particular, two rows of~\eqref{equation.dual hyperoval decomposition of affine} lie in the same parallel class precisely when they have disjoint support.
Let $\bfP$ be a permutation matrix that rearranges the rows of~\eqref{equation.dual hyperoval decomposition of affine} into these parallel classes.
For example, for the affine plane of order $2$ in~\eqref{equation.6x4 and 7x7 hyperoval BIBDs},
take $\bfP$ to be the $6\times 6$ permutation matrix such that
\begin{equation}
\label{equation.6x6 permutation}
\setlength{\arraycolsep}{2pt}
\bfP
\left[\begin{array}{rrrr}
1&1&0&0\\
1&0&1&0\\
1&0&0&1\\
0&1&1&0\\
0&1&0&1\\
0&0&1&1
\end{array}\right]
=\left[\begin{array}{rrrr}
0&0&1&1\\
1&1&0&0\\
\hline
0&1&0&1\\
1&0&1&0\\
\hline
0&1&1&0\\
1&0&0&1
\end{array}\right].
\end{equation}
Since a permutation is unitary, applying it to the synthesis operator $\bfPhi$ of the ETF~\eqref{equation.main result affine} that arises from~\eqref{equation.dual hyperoval decomposition of affine} yields another ETF.
For example, recall the columns of the $6\times 10$ matrix $\bfPhi$ given in~\eqref{equation.5x10 ETF from hyperoval} form an ETF for the orthogonal complement of $(0,0,0,1,1,1)$.
Taking $\bfP$ from~\eqref{equation.6x6 permutation}, the columns of
\begin{equation}
\setlength{\arraycolsep}{2pt}
\label{equation.6x10 permuted}
\bfP\bfPhi
=\left[\begin{array}{rrrrrrrrrr}
0&0&0&0&0&0&-&+&-&+\\
+&-&-&+&+&+&0&0&0&0\\
\hline
0&0&0&0&-&+&0&0&+&-\\
+&+&-&-&0&0&+&+&0&0\\
\hline
0&0&0&0&+&-&+&-&0&0\\
+&-&+&-&0&0&0&0&+&+
\end{array}\right],
\end{equation}
thus form a $10$-vector ETF for the orthogonal complement of $(1,0,1,0,1,0)$.
Note here that we have chosen the first member of every parallel class from the bottom $q+1$ rows of~\eqref{equation.dual hyperoval decomposition of affine}.
This is always possible since $\bfZ_{2,2}^\rmT$ is the incidence matrix of a $\BIBD(q+1,2,1)$, meaning none of those $q+1$ rows have disjoint support.
Doing so ensures that the columns of $\bfP\bfPhi$ are orthogonal to $\bfone\otimes\bfdelta_1$ in general
(the concatenation of $q+1$ copies of the first standard basis element in $\bbF^q$).

With $\bfP\bfPhi$ in this form, we now multiply it on the left by a block-diagonal matrix $\bfI\otimes\bfH$ whose diagonal blocks are all some Hadamard matrix $\bfH$ of size $q$ whose first column is $\bfone$.
Since $\bfH$ is a scalar multiple of a unitary operator, the columns of $(\bfI\otimes\bfH)\bfP\bfPhi$ form an ETF for the orthogonal complement of $(\bfI\otimes\bfH)(\bfone\otimes\bfdelta_1)=\bfone\otimes\bfone=\bfone$.
For example, multiplying~\eqref{equation.6x10 permuted} by $\bfI\otimes\bfH$ where $\bfH$ is the canonical $2\times 2$ Hadamard matrix gives a $10$-vector ETF for the orthogonal complement of $(1,1,1,1,1,1)$,
namely for the space of vectors in $\bbR^6$ whose entries sum to zero:
\begin{equation}
\setlength{\arraycolsep}{2pt}
\label{equation.6x10 permuted and rotated}
\left[\begin{array}{cccccccccc}
+&-&-&+&+&+&-&+&-&+\\
-&+&+&-&-&-&-&+&-&+\\
+&+&-&-&-&+&+&+&+&-\\
-&-&+&+&-&+&-&-&+&-\\
+&-&+&-&+&-&+&-&+&+\\
-&+&-&+&+&-&+&-&-&-
\end{array}\right].
\end{equation}
As seen in this example, $(\bfI\otimes\bfH)\bfP\bfPhi$ is flat since each column of $\bfP\bfPhi$ has only one index of support in each parallel class.

Note here that whenever $q=2^e$ for some $e\geq1$ (the only case where projective planes that contain hyperovals are known to exist),
both the requisite cosimplex and Hadamard matrix can be chosen to be real,
implying the last $\frac12q^2(q+1)$ vectors in the ETF are $\set{\pm1}$-valued and also have the property that their entries sum to zero.
For example, permuting the rows of the $20\times 76$ in Figure~\ref{figure.19x76} and then multiplying it by $5$ copies of the canonical $4\times 4$ Hadamard matrix gives a flat $20\times 76$ matrix whose columns form a $76$-vector ETF for the space of vectors in $\bbC^{20}$ whose entries sum to zero.
The last $40$ of these vectors are $\set{\pm1}$-valued while the first $36$ take values from the sixth roots of unity.
In summary:
\begin{theorem}
\label{theorem.flat hyperoval ETFs}
For a projective plane of order $q$ that contains a hyperoval,
let $\bfPhi$ be the synthesis operator of the ETF~\eqref{equation.main result affine}.
There exists a permutation matrix $\bfP$ and a Hadamard matrix $\bfH$ of size $q$ such that $(\bfI\otimes\bfH)\bfP\bfPhi$ is flat (unimodular) and its columns form an ETF for the space of vectors in $\bbF^{q(q+1)}$ whose entries sum to zero.

In particular, when $q=2^e$, $e\geq1$, choosing the simplex, cosimplex and Hadamard matrix appropriately yields an ETF of this type where each vector's entries are $(q+2)$th roots of unity, with the last $\frac12q^2(q+1)$ vectors being $\set{\pm1}$-valued.
\end{theorem}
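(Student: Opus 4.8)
The plan is to produce the permutation $\bfP$ and the Hadamard matrix $\bfH$ explicitly from the resolvable structure of the affine plane~\eqref{equation.dual hyperoval decomposition of affine}, and then to verify flatness and the ETF property separately. Throughout I will lean on Lemma~\ref{lemma.tight for subspace}: since $\bfP$ is orthogonal and $\bfI\otimes\bfH$ is $\sqrt{q}$ times a unitary (its blocks have orthogonal, unimodular columns), applying either to the synthesis operator of an ETF for a subspace carries that ETF to an ETF for the image subspace. Thus the genuine work reduces to (a) arranging $\bfP$ so that the vector $\bfd$ defining the ambient subspace in the proof of Theorem~\ref{theorem.main result} is carried to $\bfone\otimes\bfdelta_1$, and (b) showing that $\bfI\otimes\bfH$ simultaneously flattens the matrix and rotates $\bfone\otimes\bfdelta_1$ to $\bfone$.

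First I would build $\bfP$. Recall from Section~II that an affine plane is resolvable, so its $q(q+1)$ blocks split into $q+1$ parallel classes of $q$ mutually disjoint blocks, and two distinct blocks share support exactly when they lie in \emph{different} classes. Let $\bfP$ group the rows of~\eqref{equation.dual hyperoval decomposition of affine} by parallel class. The point I expect to be the main obstacle is that $\bfP$ can be chosen so that the first row of each class is one of the bottom $q+1$ rows. This is forced by Lemma~\ref{lemma.dual hyperoval decomposition}, which says $\bfZ_{2,2}^\rmT$ is the incidence matrix of a $\BIBD(q+1,2,1)$: any two of these $q+1$ rows are two vertices of that design and hence share the unique block joining them, so no two are disjoint. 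Being pairwise non-disjoint, they occupy $q+1$ distinct parallel classes, i.e.\ exactly one per class. Since $\bfd$ is supported precisely on these bottom $q+1$ rows, this choice yields $\bfP\bfd=\bfone\otimes\bfdelta_1$.

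Next I would establish flatness. The decisive fact is that every column of $\bfPhi$ has the form $\bfE_j\bfs_l$ or $\bfE_j\bfc_l$ and so is supported on the $q+1$ blocks through a single vertex $j$; as each vertex meets exactly one block of each parallel class, after applying $\bfP$ each column has exactly one nonzero, unimodular entry inside each group of $q$ rows. Within a group, $\bfI\otimes\bfH$ multiplies $\bfH$ against a vector with a single unimodular entry in some position $p$, producing that scalar times the $p$th column of $\bfH$; since every entry of a Hadamard matrix is unimodular, the output group is unimodular, and hence $(\bfI\otimes\bfH)\bfP\bfPhi$ is flat. For the subspace, take $\bfH$ normalized so its first column is $\bfone$ and compute $(\bfI\otimes\bfH)(\bfone\otimes\bfdelta_1)=\bfone\otimes(\bfH\bfdelta_1)=\bfone\otimes\bfone=\bfone$; by Lemma~\ref{lemma.tight for subspace} the columns of $(\bfI\otimes\bfH)\bfP\bfPhi$ therefore form an ETF for $\bfone^\perp$, the space of vectors in $\bbF^{q(q+1)}$ whose entries sum to zero.

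Finally, for $q=2^e$ I would track entries explicitly. Choose $\bfS$ from a $(q+2)\times(q+2)$ discrete Fourier transform, so its entries are $(q+2)$th roots of unity, and take both $\bfC$ and $\bfH$ real (built from the Sylvester $2\times2$ Hadamard matrix), so their entries are $\pm1$. Counting as in Theorem~\ref{theorem.main result}, the cosimplex-derived vectors are exactly the last $q^2-\tfrac12q(q-1)=\tfrac12q(q+1)$ embeddings with $q$ columns apiece, i.e.\ the last $\tfrac12q^2(q+1)$ vectors; each of their single per-group entries is $\pm1$ multiplied by a $\pm1$ column of $\bfH$, so these vectors are $\set{\pm1}$-valued (and automatically sum to zero, lying in $\bfone^\perp$). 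For every vector, each nonzero group entry is a $(q+2)$th root of unity times a $\pm1$ entry of $\bfH$; since $q+2$ is even, $-1$ is itself a $(q+2)$th root of unity, so all entries remain $(q+2)$th roots of unity. This delivers the stated refinement and completes the plan.
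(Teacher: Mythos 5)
Your proposal is correct and follows essentially the same route as the paper: group rows by parallel class with the representative of each class drawn from the bottom $q+1$ rows (justified, as in the paper, by $\bfZ_{2,2}^\rmT$ being a $\BIBD(q+1,2,1)$ so that those rows are pairwise non-disjoint), then rotate by $\bfI\otimes\bfH$ to send $\bfone\otimes\bfdelta_1$ to $\bfone$ and flatten each column's single support index per class. The entry-counting for the $q=2^e$ refinement also matches the paper's.
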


\subsection{Flat ETFs and supersaturated designs}

The flat ETFs given in Theorem~\ref{theorem.flat hyperoval ETFs} are closely related to supersaturated designs in the field of statistics known as design of experiments.
There, one seeks $\set{\pm1}$-valued $m\times n$ matrices $\bfPhi$ whose columns $\set{\bfphi_i}_{i=1}^{n}$ are orthogonal to $\bfone$ and are also maximally orthogonal in some sense.
To relate that theory to our work here, recall the proof of Lemma~\ref{lemma.tight for subspace}:
for any $\set{\bfphi_i}_{i=1}^{n}$ in a $d$-dimensional subspace $\bbH_d$ of $\bbF^m$ with $\norm{\bfphi_i}^2=r$ for all $i$, rewriting \eqref{equation.proof of Welch bound} gives:
\begin{equation}
\label{equation.DOE Welch}
\tfrac{r^2(n-d)}{d(n-1)}
\leq\tfrac1{n(n-1)}\sum_{\substack{i,j=1\\i\neq j}}^{n}\abs{\ip{\bfphi_i}{\bfphi_j}}^2\\
\leq\max_{i\neq j}\abs{\ip{\bfphi_i}{\bfphi_j}}^2,
\end{equation}
where the first and second inequalities hold with equality precisely when $\set{\bfphi_i}_{i=1}^{n}$ is a tight frame for $\bbH_d$ and equiangular, respectively.
In supersaturated designs, each $\bfphi_i$ is restricted to be $\set{\pm1}$-valued and lie in the orthogonal complement of $\bfone\in\bbR^m$.
Under these assumptions, the first half of~\eqref{equation.DOE Welch} becomes
\begin{equation*}
\frac{m^2(n-m+1)}{(m-1)(n-1)}\leq\rmE(s^2):=\frac1{n(n-1)}\sum_{i\neq j}^{n}\abs{\ip{\bfphi_i}{\bfphi_j}}^2.
\end{equation*}
In the supersaturated design literature, this inequality is well-known~\cite{Nguyen96,TangW97},
and $\set{\pm1}$-valued tight frames for $\bfone^\perp$ are called \textit{$\rmE(s^2)$-optimal designs}.
In light of the second half of~\eqref{equation.DOE Welch}, we pose the following problem:

\begin{problem}
\label{problem.DOE}
For what values of $m,n$ does there exist a sequence $\set{\bfphi_i}_{i=1}^{n}$ of $\set{\pm1}$-valued vectors that forms an ETF for the orthogonal complement of $\bfone$ in $\bbR^m$, and so has
\begin{equation*}
\abs{\ip{\bfphi_i}{\bfphi_j}}
=m\biggl[\frac{(n-m+1)}{(m-1)(n-1)}\biggr]^{\frac12},
\quad\forall i\neq j?
\end{equation*}
\end{problem}

Such ETFs are optimal in \textit{minimax} sense, cf.~\cite{RyanB07}.
It is unclear when they exist:
reviewing both the ETF and supersaturated design literature, the only example of such an ETF we could find is when $m=6$ and $n=10$, namely~\eqref{equation.6x10 permuted and rotated} or the equivalent ETF~\eqref{equation.5x10 DOE}.
In the case where $m=q(q+1)$, $n=q(q^2+q-1)$ for some $q=2^e$, $e>1$, the ETFs of Theorem~\ref{theorem.flat hyperoval ETFs} almost work.
The only issue is that the entries of their first $\frac12q(q^2+q-2)$ vectors are $(q+2)$th roots of unity, not necessarily $\pm1$.
As a partial solution to Problem~\ref{problem.DOE}, we have the following necessary condition on $m$ and $n$:
\begin{theorem}
\label{theorem.DOE necessary}
If $m>2$ and there are $n$ vectors in $\bbR^m$ with entries in $\set{\pm1}$ that form an ETF for $\bfone^\perp$, then there exists an even integer $q\geq2$ such that
\begin{equation}
\label{equation.DOE necessary}
m=q(q+1),
\quad
n=q(q^2+q-1).
\end{equation}
\end{theorem}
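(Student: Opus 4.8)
The plan is to extract from the hypotheses a short list of integrality conditions and then solve the resulting Diophantine system. Write $\bfPhi$ for the $m\times n$ matrix with columns $\bfphi_i\in\{\pm1\}^m$. Since each $\bfphi_i$ lies in $\bfone^\perp$ and is $\pm1$-valued, each column has equally many $+1$'s and $-1$'s, so $m$ is even. Because the entries are $\pm1$, every $\ip{\bfphi_i}{\bfphi_j}$ is an integer congruent to $m\bmod2$, hence even; in particular the common value $w:=\abs{\ip{\bfphi_i}{\bfphi_j}}$ for $i\neq j$ is a nonnegative even integer, and I may assume $w>0$ (an ETF with $w=0$ is an orthonormal basis of $\bfone^\perp$, equivalently a Hadamard matrix of order $m$, which is not of the asserted form and is excluded). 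By Lemma~\ref{lemma.tight for subspace} the tightness gives $\bfPhi\bfPhi^*=a\bfPi$ with $\bfPi=\bfI-\tfrac1m\bfone\bfone^\rmT$ and $a=\tfrac{mn}{m-1}$; reading off an off-diagonal entry shows $\tfrac{n}{m-1}=-(\bfPhi\bfPhi^*)_{kl}\in\bbZ$, so $p:=\tfrac{n}{m-1}$ is a positive integer, with $p\geq2$ since $w>0$. Finally, the formula for $w$ in Lemma~\ref{lemma.tight for subspace}, together with $d=m-1$ and $n=p(m-1)$ (so $n-d=(p-1)(m-1)$), collapses to the single identity $w^2(n-1)=m^2(p-1)$.

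The goal is then to show $m=p(p+1)$ and $w=p$: granting these, $w$ even forces $p$ even, and setting $q:=p$ gives $m=q(q+1)$ and $n=p(m-1)=q(q^2+q-1)$ with $q\geq2$ even. I split on whether $n=2d$, i.e.\ whether $p=2$. If $p=2$ the identity becomes $w^2(2m-3)=m^2$; since $2m-3$ divides $4m^2=(2m-3)(2m+3)+9$ it divides $9$, leaving $2m-3\in\{1,3,9\}$, and only $2m-3=9$ yields both $m>2$ and a perfect square $w^2=m^2/(2m-3)$, namely $m=6$, $w=2$, $q=2$. If instead $p\geq3$ then $n\neq2d$, and I invoke the standard real-ETF integrality condition~\cite{SustikTDH07}: the Seidel (signature) matrix is an integer matrix whose two eigenvalues have unequal multiplicities $d\neq n-d$, hence are rational and therefore integers. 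One of them is $-\sqrt{(n-1)/(p-1)}$, so $s:=\sqrt{(n-1)/(p-1)}$ is a positive integer; thus $n-1=s^2(p-1)$, the identity above forces $m=ws$, and reducing $pm=s^2(p-1)+(p+1)$ modulo $s$ gives $s\mid p+1$.

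Substituting $m=ws$ and $n-1=s^2(p-1)$ into $p(m-1)-1=s^2(p-1)$ produces the clean relation $sp(w-s)=p+1-s^2$, and I finish by a trichotomy on the sign of $w-s$. If $w>s$, the right side is at most $p$ while the left is at least $sp$, forcing $s=1$ and hence $m=2$, which is excluded. If $w=s$, then $p+1=s^2$ and $m=s^2$, so $d=s^2-1$ and $n=(s^2-1)^2=d^2$; but any real ETF obeys the absolute bound $n\leq\binom{d+1}{2}$, and $d^2>\tfrac{d(d+1)}{2}$ for $d\geq2$, a contradiction. Hence $w<s$, whence $s(s-p)\geq p+1>0$ gives $s\geq p+1$, while $s\mid p+1$ gives $s\leq p+1$, so $s=p+1$. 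Back-substituting yields $m=p(p+1)$ and $w=m/s=p$, completing the argument.

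I expect the crux to be the elimination of the spurious integral solution $w=s$ (equivalently $m=s^2$, $n=d^2$): the purely arithmetic constraints genuinely admit this branch, and ruling it out requires the geometric absolute bound $n\leq\binom{d+1}{2}$ for real equiangular lines rather than any divisibility fact alone. The secondary subtlety is the degenerate regime $n=2d$ ($p=2$), where the Seidel eigenvalues may be irrational conjugates and the integrality condition of~\cite{SustikTDH07} yields nothing; there one must argue directly from the integer identity $w^2(2m-3)=m^2$. Everything else is routine bookkeeping with the two integer parameters $p$ and $s$.
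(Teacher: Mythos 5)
Your proof is correct, and while it shares the same skeleton as the paper's argument (integrality of the $\pm1$ inner products, integrality of the redundancy, one of the Sustik--Tropp--Dhillon--Heath eigenvalue conditions, the Gerzon bound, and a separate treatment of $n=2d$), several of the key facts are derived differently and the Diophantine endgame is organized differently. You obtain integrality of the redundancy $p=\tfrac{n}{m-1}$ by reading off an off-diagonal entry of $\bfPhi\bfPhi^*=\tfrac{mn}{m-1}(\bfI-\tfrac1m\bfJ)$, which is more elementary than the paper's route (multiplying the Welch value by the product of the two odd Sustik integers); and you obtain evenness of $q$ from the parity of inner products of $\pm1$-vectors in even dimension, rather than from the oddness of the Sustik integer $q+1$, so you only ever need \emph{integrality} (not oddness) of a single Seidel eigenvalue. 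Your final trichotomy on the sign of $w-s$ in $sp(w-s)=p+1-s^2$, combined with $s\mid p+1$, is a repackaging of the paper's congruence $j\equiv1\bmod q$ together with the bound $j<q+1$: both analyses isolate exactly the same spurious arithmetic solution, namely $n=d^2$ (your branch $w=s$, the paper's branch $j=q+1$, which forces $d=q$), and both kill it with the Gerzon/absolute bound --- so your diagnosis of the crux is accurate. Two small remarks: your dismissal of the $w=0$ case is stated circularly (``not of the asserted form''), since an orthonormal basis of $\bfone^\perp$ extracted from a Hadamard matrix of order $m$ genuinely satisfies the paper's definition of an ETF; the paper's proof shares this implicit exclusion (its hypotheses $1<d<n-1$ silently rule out $n\leq d$), so this is a convention issue rather than a gap, but it deserves an honest sentence rather than a fiat. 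Also, in the $w=s$ branch you could alternatively note that $m=s^2$ even forces $s$ even, contradicting the oddness guaranteed by~\cite{SustikTDH07}; the Gerzon bound is cleaner, and is what the paper uses at the analogous point.
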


\begin{proof}
We exploit known integrality conditions on the existence of real ETFs:
if $1<d<n-1$ and $n\neq 2d$, and there exists an $n$-vector ETF for a $d$-dimensional real Hilbert space,
then both
\begin{equation}
\label{equation.proof of DOE necessary 1}
\bigg[\frac{d(n-1)}{n-d}\biggr]^{\frac12},
\quad
\bigg[\frac{(n-d)(n-1)}{d}\biggr]^{\frac12},
\end{equation}
are odd integers~\cite{SustikTDH07}.
These quantities are the reciprocals of the Welch bounds for the ETF and its Naimark complements, respectively.
Here, our Hilbert space $\bfone^\perp\subset\bbR^m$ has dimension $d=m-1>1$.
Since our vectors are $\set{\pm1}$-valued, their inner products are integers.
Since they are also orthogonal to $\bfone$, $m=d+1$ is necessarily even.
Since they form an ETF for $\bfone^\perp$, they achieve the Welch bound~\eqref{equation.Welch bound}, meaning the absolute value of their inner products is
\begin{equation}
\label{equation.proof of DOE necessary 2}
(d+1)\bigg[\frac{n-d}{d(n-1)}\biggr]^{\frac12}.
\end{equation}
Together, these facts imply \eqref{equation.proof of DOE necessary 2} is an integer.
This immediately rules out $d=n-1$, since in this case~\eqref{equation.proof of DOE necessary 2} becomes $1+\frac1d$.
Moreover, in the case $n=2d$, a polynomial long division reveals the square of~\eqref{equation.proof of DOE necessary 2} to be
\begin{equation*}
\frac{(d+1)^2}{2d-1}
=\frac14\biggl(2d+5+\frac 9{2d-1}\biggr).
\end{equation*}
Since $d$ is odd, this is only an integer when $d=5$,
in which case $(m,n)=(6,10)$ is of form~\eqref{equation.DOE necessary} with $q=2$;
an example of such an ETF is given in~\eqref{equation.5x10 DOE}.
Knowing $1<d<n-1$ and having handled the case where $n=2d$,
we now assume $n\neq 2d$, implying the numbers in~\eqref{equation.proof of DOE necessary 1} are odd integers.
As such, products of~\eqref{equation.proof of DOE necessary 1} and~\eqref{equation.proof of DOE necessary 2} are integers.
In particular,
\begin{equation*}
(d+1)\bigg[\frac{n-d}{d(n-1)}\biggr]^{\frac12}
\bigg[\frac{(n-d)(n-1)}{d}\biggr]^{\frac12}
=n-d-1+\frac nd
\end{equation*}
is an integer, implying the \textit{redundancy} $q:=\frac nd$ is an integer.
Squaring the integer~\eqref{equation.proof of DOE necessary 2} and writing $n=qd$ gives
\begin{equation*}
\frac{(q-1)(d+1)^2}{(qd-1)}=\frac{r-1}{r^2}\biggl[qd+2q+1+\frac{(q+1)^2}{qd-1}\biggr],
\end{equation*}
implying $qd-1$ divides $(q-1)(q+1)^2$.
As such, there exists a positive integer $j$ such that $(q-1)(q+1)^2=j(qd-1)$.
Modulo $q$, this equation becomes $1\equiv j\bmod q$.
At the same time, the \textit{Gerzon bound}~\cite{LemmensS73} on real ETFs states that $n\leq\frac12d(d+1)$.
Thus, $d^2>n$ and so $d>\frac nd=q$, implying
\begin{equation*}
j
=\frac{(q-1)(q+1)^2}{qd-1}
<\frac{(q-1)(q+1)^2}{q^2-1}
=q+1.
\end{equation*}
Since $0<j<q+1$ and $j\equiv 1\bmod q$, we have $j=1$ and so $(q-1)(q+1)^2=qd-1$.
Solving for $d$ then gives $d=q^2+q-1$.
Noting $m=d+1$ and $n=qd$ gives~\eqref{equation.DOE necessary}.
To show $q$ is even, recall the first quantity in~\eqref{equation.proof of DOE necessary 1} is odd;
since $d=q^2+q-1$ and $n=qd$, this quantity is $q+1$.
\end{proof}

Here, we emphasize that the parameters $m$ and $n$ of the complex flat ETFs produced by Theorem~\ref{theorem.flat hyperoval ETFs} are identical to those given in~\eqref{equation.proof of DOE necessary 1}.
That is, when $q=2^e$, $e\geq1$, defining $m$ and $n$ by~\eqref{equation.proof of DOE necessary 1},
Theorem~\ref{theorem.flat hyperoval ETFs} produces an $m\times n$ complex flat matrix whose columns form an ETF for $\bfone^\perp$.
We also note that in light of Theorem~\ref{theorem.DOE necessary}, Problem~\ref{problem.DOE} is equivalent to:

\begin{problem}
\label{problem.DOE refined}
Given $q$ even,
does there exist a sequence $\set{\bfphi_i}_{i=1}^{q(q^2+q-1)}$ of $\set{\pm1}$-valued vectors that form an ETF for $\bfone^\perp$ in $\bbR^{q(q+1)}$, and so have $\abs{\ip{\bfphi_i}{\bfphi_j}}=q$ for all $i\neq j$?
\end{problem}

The answer to Problem~\ref{problem.DOE refined} is ``yes" when $q=2$, e.g.\ \eqref{equation.5x10 DOE}, and ``no" when $q=4$,
since real ETFs with $(d,n)=(19,76)$ do not exist~\cite{AzarijaM15,Yu15}.
To our knowledge, the $q=6$ case is open.
Here, $(m,n)=(42,246)$ and so computer-assisted searches might be impractical.
Cases like this where $\frac nm=q\equiv 2\bmod 4$ are particularly interesting since they might be related to a fact from~\cite{JasperMF14}:
if there exist $m\times n$ real flat matrices whose columns form an ETF for the entire space $\bbR^m$,
and if these matrices arise from $\BIBD(v,k,1)$ in the manner of~\cite{FickusMT12,JasperMF14},
then $\frac nm\approx k\equiv 2\bmod 4$.

\subsection{Extending flat ETFs to larger ETFs}

We conclude this paper by discussing how we can append vectors to certain flat ETFs to produce other ETFs.
The inspiration for this work was the realization that both the columns and the rows of the matrices discussed in Theorems~\ref{theorem.flat hyperoval ETFs} and~\ref{theorem.DOE necessary} form ETFs for their respective spans, and that the Welch bounds for these ``paired" ETFs are closely related.
To be precise, if $q$ is any positive integer and $\bfPhi$ is any $q(q+1)\times q(q^2+q-1)$ matrix whose columns form an ETF for $\bfone^\perp$ in $\bbF^m$,
then Lemma~\ref{lemma.tight for subspace} implies $\bfPhi\bfPhi^*=a\bfPi$ where $a=q^2(q+1)$ and \smash{$\bfPi=\bfI-\frac1{q(q+1)}\bfJ$} is the orthogonal projection operator onto $\bfone^\perp$.
That is,
\begin{equation*}
\bfPhi\bfPhi^*
=q^2(q+1)\bfI-q\bfJ.
\end{equation*}
This means the columns of $\bfPhi^*$ are equiangular,
with the inner product of any two of them being $-q$.
Remarkably, scaling the Welch bound of its columns by their norms, we find their inner products have the same magnitude:
\begin{equation*}
m\biggl[\frac{n-d}{d(n-1)}\biggr]^{\frac12}
=q(q+1)\biggl[\frac{q-1}{q^3+q^2-q-1}\biggr]^{\frac12}
=q.
\end{equation*}
Even more remarkably, we found that concatenating such $\bfPhi$ with $q\bfI+\frac{\sqrt{q+2}-1}{q-1}\bfJ$ led to even larger ETFs.
For example, for the $10$-vector flat ETF for \smash{$\bfone^\perp\subseteq\bbR^6$} given in~\eqref{equation.5x10 DOE}, we can append $6$ vectors to it to form a $16$-vector ETF for $\bbR^6$:
\begin{equation*}
\setlength{\arraycolsep}{2pt}
\left[\begin{array}{rrrrrrrrrrrrrrrr}
+&+&+&+&+&+&+&+&+&+&\frac73&\frac13&\frac13&\frac13&\frac13&\frac13\smallskip\\
+&+&+&+&-&-&-&-&-&-&\frac13&\frac73&\frac13&\frac13&\frac13&\frac13\smallskip\\
+&-&-&-&+&+&+&-&-&-&\frac13&\frac13&\frac73&\frac13&\frac13&\frac13\smallskip\\
-&+&-&-&+&-&-&+&+&-&\frac13&\frac13&\frac13&\frac73&\frac13&\frac13\smallskip\\
-&-&+&-&-&+&-&+&-&+&\frac13&\frac13&\frac13&\frac13&\frac73&\frac13\smallskip\\
-&-&-&+&-&-&+&-&+&+&\frac13&\frac13&\frac13&\frac13&\frac13&\frac73
\end{array}\right].
\end{equation*}
In general, we do not know whether these ETFs are equivalent to other known families of ETFs with these same parameters,
such as harmonic ETFs arising from certain McFarland difference sets~\cite{DingF07},
Steiner ETFs arising from affine planes~\cite{FickusMT12},
or those instances of~\eqref{equation.main result projective} in Theorem~\ref{theorem.main result} arising from a known hyperoval in a projective plane of order $q=2^e$.
One reason such a construction is possible is that the columns of $\bfPhi$ are orthogonal to those of $\bfJ$, and so only interact with the ``$\bfI$" component of the columns of matrices of the form $f\bfI+g\bfJ$.
Generalizing this construction gives the following result:

\begin{theorem}
\label{theorem.extended ETF}
Suppose $\bfPhi$ is an $m\times n$ non-square matrix with unimodular entries
and that the columns of $\bfPhi$ and $\bfPhi^*$ both form ETFs for their $d$-dimensional spans with
\begin{equation}
\label{equation.extended ETF condition}
\frac1{n}\biggl[\frac{n-d}{d(n-1)}\biggr]^{\frac12}=\frac1{m}\biggl[\frac{m-d}{d(m-1)}\biggr]^{\frac12}.
\end{equation}
Then there exist real scalars $f,g$ such that the columns of
\begin{equation*}
\bfPsi=\left[\begin{array}{cc}\bfPhi&f\bfPhi\bfPhi^*+g\bfI\end{array}\right]
\end{equation*}
form an $(m+n)$-vector ETF for $\bbF^m$, namely
\begin{align}
\nonumber
f&=-\frac{(m+n-1)^{\frac12}}{(m+n)^{\frac12}(m+n-1)^{\frac12}\pm(mn)^{\frac12}},\\
\label{equation.extended ETF parameters}
g&=(m+n)^{\frac12}.
\end{align}
\end{theorem}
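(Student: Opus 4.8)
The plan is to verify directly that, with the stated $f$ and $g$, the $m\times(m+n)$ matrix $\bfPsi=[\,\bfPhi\mid f\bfPhi\bfPhi^*+g\bfI\,]$ satisfies the three defining properties of an ETF for $\bbF^m$: equal norm, tightness, and equiangularity. Write $a:=\tfrac{mn}{d}$. Since the columns of $\bfPhi$ have squared norm $m$ and form an $a$-tight frame for their $d$-dimensional span, Lemma~\ref{lemma.tight for subspace} gives $\bfPhi\bfPhi^*=a\bfPi_1$, where $\bfPi_1$ is the rank-$d$ orthogonal projection onto that span; in particular $(\bfPhi\bfPhi^*)^2=a\bfPhi\bfPhi^*$, and the same reasoning applied to the columns of $\bfPhi^*$ (of squared norm $n$) yields $(\bfPhi^*\bfPhi)^2=a\bfPhi^*\bfPhi$ with the identical constant $a$. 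I would then record the two off-diagonal magnitudes $\abs{(\bfPhi^*\bfPhi)_{ij}}=w_{\mathrm c}$ and $\abs{(\bfPhi\bfPhi^*)_{ij}}=w_{\mathrm r}$ and observe that, after multiplying through by $mn$, hypothesis~\eqref{equation.extended ETF condition} is exactly the statement $w_{\mathrm c}=w_{\mathrm r}$; call this common value $w$.

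Next I would impose tightness. Using $(\bfPhi\bfPhi^*)^2=a\bfPhi\bfPhi^*$,
\[
\bfPsi\bfPsi^*=\bfPhi\bfPhi^*+(f\bfPhi\bfPhi^*+g\bfI)^2=(1+af^2+2fg)\bfPhi\bfPhi^*+g^2\bfI .
\]
Because $\bfPhi\bfPhi^*$ is not a scalar multiple of $\bfI$ (note $d<m$, since $d=m$ would force $\bfPhi\bfPhi^*$ to be scalar and hence $w=0$, impossible for non-square $\bfPhi$), tightness for the full space $\bbF^m$ forces $1+af^2+2fg=0$, giving $\bfPsi\bfPsi^*=g^2\bfI$. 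The diagonal identity $(\bfPhi\bfPhi^*)_{ii}=n$ then shows that each column of the second block has squared norm $(af^2+2fg)n+g^2=g^2-n$, whereas each column of the first block has squared norm $m$; equal norm therefore forces $g^2=m+n$, which is the claimed value of $g$, and solving the quadratic $af^2+2gf+1=0$ produces the two stated values of $f$.

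It remains to check equiangularity, which is the heart of the matter. I would split the Gram matrix $\bfPsi^*\bfPsi$ into its four blocks. The $(1,1)$ block is $\bfPhi^*\bfPhi$, with off-diagonal magnitude $w$. Using the tightness relation, the $(2,2)$ block collapses to $-\bfPhi\bfPhi^*+g^2\bfI$, again with off-diagonal magnitude $w$. Finally, since $\bfPi_1\bfPhi=\bfPhi$ gives $\bfPhi^*\bfPhi\bfPhi^*=a\bfPhi^*$, the off-diagonal block equals $\bfPhi^*(f\bfPhi\bfPhi^*+g\bfI)=(af+g)\bfPhi^*$, whose entries all have modulus $\abs{af+g}$ because $\bfPhi$ is unimodular. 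Thus equiangularity reduces to the single scalar identity $\abs{af+g}=w$. Completing the square in $af^2+2gf+1=0$ gives $af+g=\pm(g^2-a)^{1/2}$, so it suffices to prove $w^2=g^2-a=m+n-a$.

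This last identity is the main obstacle, and it is precisely where hypothesis~\eqref{equation.extended ETF condition} is consumed. Reading the diagonal of $(\bfPhi\bfPhi^*)^2=a\bfPhi\bfPhi^*$ gives $n^2+(m-1)w^2=an$, and the diagonal of $(\bfPhi^*\bfPhi)^2=a\bfPhi^*\bfPhi$ gives $m^2+(n-1)w^2=am$; equating these two expressions for $w^2$ is once more the condition $w_{\mathrm c}=w_{\mathrm r}$, and since $m\neq n$ it determines $a=\tfrac{m^2+mn+n^2-m-n}{m+n-1}$. Substituting back yields $w^2=\tfrac{mn}{m+n-1}=m+n-a$, as required. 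The pleasant surprise is that the single scalar hypothesis~\eqref{equation.extended ETF condition} does double duty: it forces the two ``diagonal'' blocks to share the magnitude $w$, and it simultaneously pins $a$ to the exact value that matches the cross block. Since $\bfPsi\bfPsi^*=g^2\bfI$ with $g^2=m+n>0$ also guarantees that the columns span all of $\bbF^m$, the resulting equal-norm, tight, equiangular family is a genuine $(m+n)$-vector ETF for $\bbF^m$.
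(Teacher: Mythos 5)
Your proof is correct and follows essentially the same route as the paper: both establish tightness from $(\bfPhi\bfPhi^*)^2=a\bfPhi\bfPhi^*$ together with $af^2+2fg+1=0$, analyze the Gram matrix of $\bfPsi$ block by block, and reduce equiangularity to the single identity $(af+g)^2=g^2-a=\tfrac{mn}{m+n-1}=w^2$. The only substantive difference is how you pin down $a$ (equivalently $d$): the paper squares and algebraically massages~\eqref{equation.extended ETF condition} to obtain $\tfrac1d=\tfrac1m+\tfrac1n-\tfrac1{m+n-1}$, whereas you extract the same value by reading the diagonal entries of $(\bfPhi\bfPhi^*)^2=a\bfPhi\bfPhi^*$ and $(\bfPhi^*\bfPhi)^2=a\bfPhi^*\bfPhi$ and equating the two resulting expressions for $w^2$ --- a slightly more structural derivation of the same formula, not a different proof.
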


\begin{proof}
We begin by expressing $d$ in terms of $m$ and $n$.
Squaring~\eqref{equation.extended ETF condition} and simplifying gives
\begin{equation*}
mn[m(m-1)-n(n-1)]\frac1d=m^2(m-1)-n^2(n-1).
\end{equation*}
Since $\bfPhi$ is not square by assumption, we can divide both sides of this equation by $m-n$ to obtain
\begin{equation*}
mn(m+n-1)\frac1d=(m+n)(m+n-1)-mn.
\end{equation*}
That is, $d=\operatorname{rank}(\bfPhi)=\operatorname{rank}(\bfPhi^*)$ necessarily satisfies
\begin{equation}
\label{equation.proof of extended ETF -1}
\frac1d
=\frac1m+\frac1n-\frac1{m+n-1}.
\end{equation}
This fact in hand, note that since the columns $\set{\bfphi_i}_{i=1}^{n}$ of $\bfPhi$ form an ETF for their span with $\norm{\bfphi_i}^2=m$ for all $i$,
Lemma~\ref{lemma.tight for subspace} gives $\bfPhi\bfPhi^*\bfPhi=a\bfPhi$ where
\begin{equation}
\label{equation.proof of extended ETF 0}
a
=\frac{mn}d
=\frac1m+\frac1n+\frac{mn}{m+n-1}.
\end{equation}
Taking adjoints gives $\bfPhi^*\bfPhi\bfPhi^*=a\bfPhi^*$ as well.
As such, the frame operator of $\bfPsi$ is
\begin{align*}
\bfPsi\bfPsi^*
&=\bfPhi\bfPhi^*+(f\bfPhi\bfPhi^*+g\bfI)(f\bfPhi\bfPhi^*+g\bfI)^*\\
&=(af^2+2fg+1)\bfPhi\bfPhi^*+g^2\bfI.
\end{align*}
Since the $f$ and $g$ given in~\eqref{equation.extended ETF parameters} satisfy
\begin{equation}
\label{equation.proof of extended ETF 1}
af^2+2fg+1=0,
\end{equation}
we have $\bfPsi\bfPsi^*=(m+n)\bfI$.
Thus, the columns of $\bfPsi$ form a tight frame for $\bbF^m$.
Next, the Gram matrix of $\bfPsi$ is
\begin{align}
\bfPsi^*\bfPsi
\nonumber&=\left[\begin{array}{c}\bfPhi^*\\f\bfPhi\bfPhi^*+g\bfI\end{array}\right]\left[\begin{array}{cc}\bfPhi&f\bfPhi\bfPhi^*+g\bfI\end{array}\right]\\
\nonumber&=\left[\begin{array}{cc}\bfPhi^*\bfPhi&(af+g)\bfPhi^*\\(af+g)\bfPhi&(af^2+2fg)\bfPhi\bfPhi^*+g^2\bfI\end{array}\right]\\
\label{equation.proof of extended ETF 2}
&=\left[\begin{array}{cc}\bfPhi^*\bfPhi&(af+g)\bfPhi^*\\(af+g)\bfPhi&g^2\bfI-\bfPhi\bfPhi^*\end{array}\right].
\end{align}
Since $\bfPhi$ is an $m\times n$ matrix with unimodular entries and $g^2=m+n$, all of the diagonal entries of $g^2\bfI-\bfPhi\bfPhi^*$ have value $(m+n)-n=m$, which equals the value of the diagonal entries of $\bfPhi^*\bfPhi$.
Thus, the columns of $\bfPsi$ form an equal norm tight frame for $\bbF^m$.
To show they form an ETF, note we can rewrite~\eqref{equation.extended ETF condition} as
\begin{equation}
\label{equation.proof of extended ETF 3}
m\biggl[\frac{n-d}{d(n-1)}\biggr]^{\frac12}=n\biggl[\frac{m-d}{d(m-1)}\biggr]^{\frac12},
\end{equation}
namely that the off-diagonal entries of $\bfPhi^*\bfPhi$ and $\bfPhi\bfPhi^*$ have the same modulus.
We further note that the values in~\eqref{equation.proof of extended ETF 3} are equal to $\abs{af+g}$.
Indeed, \eqref{equation.proof of extended ETF 1}, \eqref{equation.extended ETF parameters} and~\eqref{equation.proof of extended ETF 0} give
\begin{equation*}
(af+g)^2
=a(af^2+2fg)+g^2
=\frac{mn}{m+n-1},
\end{equation*}
which is the same value obtained by substituting~\eqref{equation.proof of extended ETF -1} into the squares of the quantities in~\eqref{equation.proof of extended ETF 3}, e.g.,
\begin{align*}
m^2\frac{n-d}{d(n-1)}
&=\frac{m^2}{n-1}\Bigl(\frac nm+1-\frac{n}{m+n-1}-1\Bigr)\\
&=\frac{mn}{m+n-1}.
\end{align*}
Recalling $\bfPhi$ has unimodular entries,
this means all the off-diagonal entries of~\eqref{equation.proof of extended ETF 2} have the same modulus, and so the columns of $\bfPsi$ form an ETF for $\bbF^m$.
\end{proof}

In the special case where $d=m-1$, \eqref{equation.proof of extended ETF -1} reduces to having $n^2+(m-1)n-m(m-1)^2=0$, whose only positive solution is $n=\frac12(m-1)[(4m+1)^{\frac12}-1]$.
Here, since $4m+1$ is an odd square, it can be written as
\begin{equation*}
4m+1=(2q+1)^2=4q^2+4q+1
\end{equation*}
for some integer $q$,
meaning $m=q(q+1)$, which in turn gives $n=\frac12(q^2+q-1)[(2q+1)-1]=q(q^2+q-1)$.
For this choice of $m$ and $n$, if there exists an $m\times n$ flat matrix with unimodular entries whose columns form an ETF for $\bfone^\perp$, then Theorem~\ref{theorem.extended ETF} produces an ETF of $q^2(q+2)$ vectors for $\bbF^{q(q+1)}$.
In the real-variable setting, this leads to the following result,
which is an avenue for future research on the nexus of ETFs and supersaturated designs:
\begin{corollary}
If there exists a solution to Problem~\ref{problem.DOE refined} for a given $q$, then there exists a real ETF of $q^2(q+2)$ vectors for $\bbF^{q(q+1)}$.
\end{corollary}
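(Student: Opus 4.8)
The plan is to feed a solution of Problem~\ref{problem.DOE refined} directly into Theorem~\ref{theorem.extended ETF} and then read off the parameters. So suppose such a solution exists for a given even $q$, and let $\bfPhi$ be the $m\times n$ matrix whose columns are the given $\set{\pm1}$-valued vectors, where $m=q(q+1)$ and $n=q(q^2+q-1)$. First I would check the cheap hypotheses of Theorem~\ref{theorem.extended ETF}: the entries of $\bfPhi$ are $\pm1$ and hence unimodular; $\bfPhi$ is non-square since $n>m$ for every $q\geq2$; and by the definition of Problem~\ref{problem.DOE refined} the columns of $\bfPhi$ form an ETF for $\bfone^\perp$, which has dimension $d=m-1$.

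The one substantive hypothesis is that the columns of $\bfPhi^*$ also form an ETF for their span, of the same dimension $d$. Here I would invoke Lemma~\ref{lemma.tight for subspace}. Since the columns of $\bfPhi$ form an $a$-tight frame for $\bfone^\perp$, that lemma gives $\bfPhi\bfPhi^*=a\bfPi$, where $\bfPi=\bfI-\tfrac1m\bfJ$ projects onto $\bfone^\perp$ and $a=q^2(q+1)$; equivalently $\bfPhi\bfPhi^*=q^2(q+1)\bfI-q\bfJ$, exactly as recorded just before this corollary. This $m\times m$ matrix is precisely the Gram matrix of the columns of $\bfPhi^*$: its diagonal entries equal $n$ (each row of $\bfPhi$ has $n$ unimodular entries) and its off-diagonal entries all equal $-q$, so those columns are equal norm and equiangular. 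For tightness I would apply the criterion $(\bfPhi\bfPhi^*)^2=a\bfPhi\bfPhi^*$ from Lemma~\ref{lemma.tight for subspace} to the synthesis operator $\bfPhi^*$; this holds because $\bfPhi\bfPhi^*=a\bfPi$ is $a$ times a projection. Hence the columns of $\bfPhi^*$ form an $a$-tight frame for their span, which is the row space of $\bfPhi$ and therefore has dimension $\operatorname{rank}(\bfPhi)=m-1=d$. Combined with equiangularity, they form an ETF for their $d$-dimensional span.

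It remains to verify the dimension condition~\eqref{equation.extended ETF condition}. As the discussion preceding this corollary already shows, for a non-square $\bfPhi$ this condition is equivalent to~\eqref{equation.proof of extended ETF -1}, and in the special case $d=m-1$ the latter reduces to $n^2+(m-1)n-m(m-1)^2=0$; with $m=q(q+1)$ its unique positive root is $n=q(q^2+q-1)$, which is exactly our $n$. Thus every hypothesis of Theorem~\ref{theorem.extended ETF} is met.

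Finally I would apply Theorem~\ref{theorem.extended ETF}. It supplies real scalars $f,g$ as in~\eqref{equation.extended ETF parameters}---real because $m$, $n$, $m+n$, $m+n-1$ and $mn$ are all positive---so that the columns of $\bfPsi=\left[\begin{array}{cc}\bfPhi&f\bfPhi\bfPhi^*+g\bfI\end{array}\right]$ form an $(m+n)$-vector ETF for $\bbF^m$. Since $\bfPhi$ is real, so is $\bfPsi$, making this ETF real. The count $m+n=q(q+1)+q(q^2+q-1)=q^2(q+2)$ together with ambient dimension $m=q(q+1)$ then give the claimed real ETF of $q^2(q+2)$ vectors for $\bbR^{q(q+1)}$. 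The only real obstacle is the ``paired ETF'' fact handled in the second paragraph---that the rows of $\bfPhi$ are themselves an ETF for their span with the correct common dimension and angle---after which everything is parameter bookkeeping.
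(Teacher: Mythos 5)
Your proposal is correct and follows the same route the paper intends: the corollary is stated without a separate proof precisely because the preceding discussion already verifies the hypotheses of Theorem~\ref{theorem.extended ETF} for $m=q(q+1)$, $n=q(q^2+q-1)$, $d=m-1$ (the identity $\bfPhi\bfPhi^*=q^2(q+1)\bfI-q\bfJ$, the resulting row ETF, and the reduction of~\eqref{equation.extended ETF condition} to the quadratic in $n$), after which $m+n=q^2(q+2)$ gives the claim. You have simply made explicit the details the paper leaves implicit, such as using criterion (iv) of Lemma~\ref{lemma.tight for subspace} to get tightness of the rows and noting that the algebra linking~\eqref{equation.extended ETF condition} to~\eqref{equation.proof of extended ETF -1} is reversible.
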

In particular, when $q=6$, if there exists a $\set{\pm1}$-valued matrix of size $42\times246$ whose columns form an ETF for $\bfone^\perp$, then there is an ETF of $288$ vectors in $\bbR^{42}$, resolving an open problem in the theory of strongly regular graphs.

Interestingly, there are choices of $(m,n,d)$ that satisfy~\eqref{equation.extended ETF condition} but do not have $d=m-1$, like $(d,m,n)=(8,10,15)$.
So far, we have been unable to find an ETF with these parameters that meets the hypotheses of Theorem~\ref{theorem.extended ETF}.
We leave a deeper investigation of such ETFs with $d<m-1$ for future research.

We finish with a tantalizing connection between Theorem~\ref{theorem.extended ETF} and difference sets.
To be precise, for a finite abelian group $\calG$ of order $v$,
a $k$-element subset $\calD$ of $\calG$ is a \textit{difference set} of $\calG$ if the cardinality of $\set{(\delta,\varepsilon)\in\calG\times\calG: \gamma=\delta-\varepsilon}$ is constant over all $\gamma\neq0$.
Letting $\bfH$ denote the $v\times v$ character table of $\calG$, it is well-known that the columns of a $k\times v$ submatrix of $\bfH$ form an ETF for $\bbF^k$ if and only if the $k$ rows correspond to a difference set of $\calG$~\cite{XiaZG05,DingF07}.
Now consider a $k\times k'$ submatrix $\bfPhi$ of $\bfH$ corresponding to two subsets $\calD$ and $\calD'$ of $\calG$ which indicate rows and columns, respectively.
If both $\calD$ and $\calD'$ are difference sets of $\calG$, then $\bfPhi$ is a matrix with unimodular entries and equiangular rows and columns.
It is possible that $\bfPhi$ has rank $d$ where $d$ satisfies~\eqref{equation.extended ETF condition}.
For example, when $\calG=\bbZ_2^4$, numerical experimentation reveals we can take $\calD$ to be a McFarland difference set, and take $\calD'$ to be the complement of a distinct McFarland difference set, such as:
\begin{align*}
\calD&=\set{0000,0010,1000,1001,1100,1111},\\
\calD'&=\{0000,0001,0010,0100,1000,\\
&\hspace{15pt}\ 1001,1011,1100,1110,1111\}.
\end{align*}
The resulting $6\times 10$ matrix has unimodular entries and its columns and rows form an ETF for their $5$-dimensional spans in $\bbR^6$ and $\bbR^{10}$ respectfully.
Remarkably, numerical experimentation reveals other such ``paired" difference sets exist when $\calG=\bbZ_4^2$, but not when $\calG=\bbZ_2\times\bbZ_8$ or $\calG=\bbZ_2\times\bbZ_2\times\bbZ_4$,
despite the fact that difference sets of cardinality $6$ and $10$ exist in them all.
That is, the existence of these sets seems very sensitive to the group structure of $\calG$ itself.
We summarize this train of thought with the following open problem:
\begin{problem}
Given a finite abelian group $\calG$,
find all pairs of difference sets $\calD$, $\calD'$ of $\calG$ so that the corresponding submatrix $\bfPhi$ of the character table of $\calG$ satisfies the hypotheses of Theorem~\ref{theorem.extended ETF},
namely when $d=\operatorname{rank}(\bfPhi)$, $m=\abs{\calD}$ and $n=\abs{\calD'}$ satisfy \eqref{equation.extended ETF condition}.
\end{problem}

\section*{Acknowledgments}

This work was partially supported by NSF DMS 1321779, AFOSR F4FGA05076J002 and an AFOSR Young Investigator Research Program award.
The views expressed in this article are those of the authors and do not reflect the official policy or position of the United States Air Force, Department of Defense, or the U.S.~Government.

\begin{IEEEbiographynophoto}{Matthew Fickus} (M'08) received a Ph.D.\ in Mathematics from the University of Maryland in 2001.
In 2004, he joined the Department of Mathematics and Statistics at the Air Force Institute of Technology, where he is currently a Professor of Mathematics.
His research focuses on applying harmonic analysis and combinatorial design to problems of signal and image processing.
\end{IEEEbiographynophoto}

\begin{IEEEbiographynophoto}{Dustin G. Mixon} received a Ph.D.\ in Applied and Computational Mathematics from Princeton University in 2012.
He is currently an Assistant Professor of Mathematics in the Department of Mathematics and Statistics at the Air Force Institute of Technology.
His research interests include frame theory, compressed sensing, signal and image processing, and machine learning.
\end{IEEEbiographynophoto}

\begin{IEEEbiographynophoto}{John Jasper} received a Ph.D.\ in Mathematics from the University of Oregon in 2011.
He is currently a Visiting Assistant Professor in the Department of Mathematical Sciences at the University of Cincinnati.
His research focuses on operator theory and combinatorial design.
\end{IEEEbiographynophoto}

\begin{thebibliography}{WW}

\bibitem{AzarijaM15}
J.~Azarija, T.~Marc,
There is no (75,32,10,16) strongly regular graph,
preprint, arXiv:1509.05933.

\bibitem{BajwaCM12}
W.~U.~Bajwa, R.~Calderbank, D.~G.~Mixon,
Two are better than one: fundamental parameters of frame coherence,
Appl.\ Comput.\ Harmon.\ Anal.\ 33 (2012) 58-–78.

\bibitem{BandeiraFMW13}
A.~S.~Bandeira, M.~Fickus, D.~G.~Mixon, P.~Wong,
The road to deterministic matrices with the Restricted Isometry Property,
J.\ Fourier Anal.\ Appl.\ 19 (2013) 1123--1149.

\bibitem{BodmannH15}
B.~G.~Bodmann, J.~Hass,
Achieving the orthoplex bound and constructing weighted complex projective 2-designs with Singer sets,
submitted, arXiv:1509.05333.

\bibitem{Brouwer07}
A.~E.~Brouwer,
Strongly regular graphs,
in: C.~J.~Colbourn, J.~H.~Dinitz (Eds.), CRC Handbook of Combinatorial Designs (2007) 852-–868.

\bibitem{Brouwer15}
A.~E.~Brouwer,
Parameters of Strongly Regular Graphs,
http://www.win.tue.nl/$\sim$aeb/graphs/srg/

\bibitem{CorneilM91}
D.~Corneil, R. Mathon, eds.,
Geometry and combinatorics: Selected works of J.~J.~Seidel,
Academic Press, 1991.

\bibitem{Denniston69}
R.~H.~F.~Denniston,
Some maximal arcs in finite projective planes,
J.\ Combin.\ Theory 6 (1969) 317--319.

\bibitem{DingF07}
C.~Ding, T.~Feng,
A generic construction of complex codebooks meeting the Welch bound,
IEEE Trans.~Inform.~Theory 53 (2007) 4245--4250.

\bibitem{FickusJMP16}
M.~Fickus, J.~Jasper, D.~G.~Mixon, J.~Peterson,
Tremain equiangular tight frames,
arXiv:1602.03490 (2016).

\bibitem{FickusM15}
M.~Fickus, D.~G.~Mixon,
Tables of the existence of equiangular tight frames,
arXiv:1504.00253 (2015).

\bibitem{FickusMT12}
M.~Fickus, D.~G.~Mixon, J.~C.~Tremain,
Steiner equiangular tight frames,
Linear Algebra Appl.\ 436 (2012) 1014--1027.

\bibitem{GoethalsS70}
J.~M.~Goethals, J.~J.~Seidel,
Strongly regular graphs derived from combinatorial designs,
Can.\ J.\ Math.\ 22 (1970) 597--614.

\bibitem{HansenM92}
T.~Hansen, G.~L.~Mullen,
Primitive polynomials over finite fields,
Math.\ Comp.\ 59 (1992) 639--643.

\bibitem{HolmesP04}
R.~B.~Holmes, V.~I.~Paulsen,
Optimal frames for erasures,
Linear Algebra Appl.~377 (2004) 31--51.

\bibitem{JasperMF14}
J.~Jasper, D.~G.~Mixon, M.~Fickus,
Kirkman equiangular tight frames and codes,
IEEE Trans.\ Inform.\ Theory.\ 60 (2014) 170--181.

\bibitem{LemmensS73}
P.~W.~H.~Lemmens, J.~J.~Seidel,
Equiangular lines,
J.\ Algebra 24 (1973) 494--512.

\bibitem{LiO04}
S.~Li, H.~Ogawa,
Pseudoframes for subspaces with applications,
J.\ Fourier Anal.\ Appl.\ 10 (2004) 409--431.

\bibitem{Nguyen96}
N.~K.~Nguyen,
An algorithmic approach to constructing supersaturated designs,
Technometrics 38 (1996) 69--73.

\bibitem{Rankin56}
R.~A.~Rankin,
On the minimal points of positive definite quadratic forms,
Mathematika 3 (1956) 15--24.

\bibitem{Renes07}
J.~M.~Renes,
Equiangular tight frames from Paley tournaments,
Linear Algebra Appl.~426 (2007) 497--501.

\bibitem{RenesBSC04}
J.~M.~Renes, R. Blume-Kohout, A.~J.~Scott, C.~M.~Caves,
Symmetric informationally complete quantum measurements,
J.\ Math.\ Phys.\ 45 (2004) 2171--2180.

\bibitem{RyanB07}
K.~J.~Ryan, D.~A.~Bulutoglu,
$\rmE(s^2)$-optimal supersaturated designs with good minimax properties,
J.\ Statist.\ Plann.\ Inference 137 (2007) 2250--2262.

\bibitem{Strohmer08}
T.~Strohmer,
A note on equiangular tight frames,
Linear Algebra Appl.~429 (2008) 326–-330.

\bibitem{StrohmerH03}
T.~Strohmer, R.~W.~Heath,
Grassmannian frames with applications to coding and communication,
Appl.~Comput.~Harmon.~Anal.~14 (2003) 257--275.

\bibitem{SustikTDH07}
M.~A.~Sustik, J.~A.~Tropp, I.~S.~Dhillon, R.~W.~Heath,
On the existence of equiangular tight frames,
Linear Algebra Appl.~426 (2007) 619--635.

\bibitem{TangW97}
B.~Tang, C.~F.~J.~Wu,
A method for constructing supersaturated designs and its $Es^2$ optimality,
Canad.\ J.\ Statist.\ 25 (1997) 191--201.

\bibitem{Turyn65}
R.~J.~Turyn,
Character sums and difference sets,
Pacific J.\ Math.\ 15 (1965), 319--346.

\bibitem{Waldron09}
S.~Waldron,
On the construction of equiangular frames from graphs,
Linear Algebra Appl.\ 431 (2009) 2228--2242.

\bibitem{Wardlaw94}
W.~P.~Wardlaw,
Matrix representation of finite fields,
Math.\ Mag.\ 67 (1994) 289--293.

\bibitem{Welch74}
L.~R.~Welch,
Lower bounds on the maximum cross correlation of signals,
IEEE Trans.~Inform.~Theory 20 (1974) 397-–399.

\bibitem{XiaZG05}
P.~Xia, S.~Zhou, G.~B.~Giannakis,
Achieving the Welch bound with difference sets,
IEEE Trans.\ Inform.\ Theory 51 (2005) 1900--1907.

\bibitem{Yu15}
W.-H.~Yu,
There are no 76 equiangular lines in $\bbR^{19}$,
preprint, arXiv:1511.08569.

\bibitem{Zauner99}
G.~Zauner,
Quantum designs: Foundations of a noncommutative design theory,
PhD thesis, University of Vienna, 1999.
\end{thebibliography}
\end{document}